\newcommand{\complex}{{\mathbb C}}
\newcommand{\ball}{{\mathbb B}}
\newcommand{\calc}{{\mathcal C}}
\newcommand{\calm}{{\mathcal M}}
\newcommand{\calo}{{\mathcal O}}
\theoremstyle{plain}
        \newtheorem{theorem}{Theorem}[section]
        \newtheorem{lemma}[theorem]{Lemma}
        \newtheorem{remark}[theorem]{Remark}
        \newtheorem{proposition}[theorem]{Proposition}
        \newtheorem{corollary}[theorem]{Corollary}
        \newtheorem{question}[theorem]{Question}
\theoremstyle{definition}
\newtheorem{assumption}[theorem]{Assumption}
\title{An Analytic Grothendieck Riemann Roch Theorem}
\author{Ronald G. Douglas}
\address{Department of Mathematics, Texas A$\&$M University, College Station, TX, 77843}
\email{rdouglas@math.tamu.edu}
\author{Xiang Tang}
\address{Department of Mathematics, Washington University, St. Louis, Missouri, USA, 63130}
\email{xtang@math.wustl.edu}
\author{Guoliang Yu}
\address{Department of Mathematics, Texas  A$\&$M University, College Station, TX, 77843}
\email{guoliangyu@math.tamu.edu}
\begin{document}
\begin{abstract}We extend the Boutet de Monvel Toeplitz index theorem to complex manifold with isolated singularities following the relative $K$-homology theory of Baum, Douglas, and Taylor for  manifold with boundary. We apply this index theorem to study the Arveson-Douglas conjecture. Let $\ball^m$ be the unit ball in $\mathbb{C}^m$, and $I$ an ideal in the polynomial algebra $\mathbb{C}[z_1, \cdots, z_m]$. We prove that when the zero variety $Z_I$ is a complete intersection space with only isolated singularities and intersects with the unit sphere $\mathbb{S}^{2m-1}$ transversely, the representations of $\mathbb{C}[z_1, \cdots, z_m]$ on the closure of $I$ in $L^2_a(\ball^m)$ and also the corresponding quotient space $Q_I$ are essentially normal. Furthermore, we prove an index theorem for Toeplitz operators on $Q_I$ by showing that the representation of $\mathbb{C}[z_1, \cdots, z_m]$ on the quotient space $Q_I$ gives the fundamental class of the boundary $Z_I\cap \mathbb{S}^{2m-1}$.  In the appendix, we prove with Kai Wang that if $f\in L^2_a(\ball^m)$ vanishes on $Z_I\cap \ball ^m$, then $f$ is contained inside the closure of the ideal $I$ in $L^2_a(\ball^m)$. 
\end{abstract}
\maketitle
\section{Introduction}


Let $X$ and $Y$ be closed smooth complex manifolds, and $f: X\to Y$ be a proper smooth map. The classical Grothendieck-Riemann-Roch theorem \cite{bo-se:rr} relates the push forward maps on the Grothendieck groups $f_!: K_0(X)\to K_0(Y)$ of bounded complexes of coherent sheaves and the Chow groups $f_*: A(X)\to A(Y)$ of subvarieties modulo rational equivalence. More precisely, let $\operatorname{Ch}: K_0(X)\to A(X)$ be the Chern character map, and $\operatorname{Td}(X)\in A(X)$  be the Todd genus of $X$. The Grothendieck-Riemann-Roch theorem states that for a vector bundle $E$ on $X$, 
\[
\operatorname{Ch}\big(f_!(E)\big)\cup \operatorname{Td}(Y)=f_*\big( \operatorname{Ch}(E)\cup \operatorname{Td}(X)\big).
\]
In noncommutative geometry, the push forward map $f_*:K_
\bullet(X)\to K_\bullet(Y)$ on the $K$-homology group introduced by Brown, Douglas, and Fillmore \cite{bdf:k-hom} is related to the  push forward map $f_*: HP^\bullet(C^\infty(X))\to HP^\bullet(C^\infty(Y))$  on the periodic cyclic cohomology introduced by Connes \cite{c:cyclic} via the Connes-Chern character $\operatorname{Ch}: K_\bullet(-)\to HP^\bullet(-)$ satisfying 
\[
\operatorname{Ch}\big(f_*([D])\big)=f_*\big(\operatorname{Ch}([D])\big). 
\]
This can be viewed as a noncommutative generalization of the Grothendieck-Riemann-Roch theorem.



In this article, motivated from questions in operator theory, we are interested in extending the above study of push forward maps in two directions. 
\begin{enumerate}
\item Allow $X$ and $Y$ to have singularities.
\item Allow $X$ and $Y$ to have boundaries.   
\end{enumerate}
In the literature, many interesting works have been developed to address the above questions. For example, Baum, Fulton, and McPherson in \cite{bfm:srr-1}, \cite{bfm:srr-2} proved the Riemann-Roch theorem for a singular projective variety $X$; Baum and the first author in \cite{bd:theory}, \cite{bdt:boundary} studied  the relative $K$-homology groups, $K_\bullet(X, \partial X)$, for a manifold $X$ with boundary, $\partial X$. However, the formulation of the general Grothendieck-Riemann-Roch theorem, including the above two special cases, is missing. 

In this paper, we study the generalization of the Grothendieck-Riemann-Roch theorem in the following setting. Let $A=\complex[z_1, \cdots, z_m]$ be the polynomial ring of $m$ variables. Let $I$ be an ideal of $A$. Define 
\[
Z_I=\{(z_1, \cdots, z_m)\in \mathbb{C}^m\ :\ a(z_1, \cdots, z_m)=0, \forall a\in I \}. 
\] 
We point out that the analytic space $Z_I$ may have singularities. Let $\mathbb{B}^m$ be the open unit ball in $\complex^m$ with the natural euclidean metric, and $\partial \ball^m:=\overline{\ball}^m\backslash \ball^m$ be its boundary, the unit sphere $\mathbb{S}^{2m-1}$. Denote  $Z_I\cap \mathbb{B}^m$ by $\Omega_I$. The analytic space $\Omega_I$ is naturally a (singular) submanifold of $\mathbb{B}^m$ with the  boundary $\partial \Omega_I:=\overline{\Omega}_I\backslash \Omega_I=Z_I\cap \partial \ball^m$.

When $Z_I$ is smooth and intersects the unit sphere $\mathbb{S}^{2m-1}=\partial \ball^m$ transversely, $\overline{\Omega}_I$ is a smooth complex manifold with a strongly pseudoconvex boundary $\partial \Omega_I=Z_I\cap \mathbb{S}^m$ (of odd dimension). Baum, Douglas, and Taylor in \cite{bdt:boundary} introduced a $K$-homology class  $[D_N]$ in $K_0(\overline{\Omega}_I, \partial \Omega_I)$ from the $\bar{\partial}$-operator on the Dolbeault complex of $\Omega_I$ with the Neumann boundary condition. And they proved that the boundary map $\partial: K_0(\overline{\Omega}_I, \partial \Omega_I)\to K_1(\partial \Omega_I)$ maps $[D_N]$ to the fundamental class on $\partial \Omega_I$ naturally associated to the $CR$-structure on $\partial \Omega_I$. Furthermore, the fundamental class on $\partial \Omega_I$ is the spin$^c$ Dirac operator  on $\partial \Omega_I$ associated to the $CR$-structure. This identification of $\partial ([D_N])$ provides a different proof of Boutet de Monvel's Toeplitz index theorem \cite{boutet:index}.

In this article, we extend the above Baum-Douglas-Taylor result to the following case. Let $I$ be generated by $p_1, \cdots, p_M\in A=\mathbb{C}[z_1, \cdots, z_m]$ with $M\leq m-2$. We make the following assumptions. 
\begin{assumption}\label{ass:principal} 
\begin{enumerate}    
\item The Jacobian matrix $(\partial p_i/\partial z_j)_{i,j}$ is of maximal rank on the boundary $\partial \Omega_I=Z_I\cap \partial \mathbb{\ball}^m$; 
\item $Z_I$ intersects $\partial \mathbb{B}^m$ transversely.
\end{enumerate}
\end{assumption}
Under Assumption \ref{ass:principal}, $\Omega_I$ is an analytic space of complex dimension $k:=m-M\geq 2$ and complex codimension $M$. $\Omega_I$ has a smooth strongly pseudoconvex boundary $\partial \Omega_I=Z_I\cap \partial \ball^M$ and (possibly) a finite number of isolated singularities away from the boundary. Furthermore, by the assumption on the Jacobian matrix, $\Omega_I$ is a complete intersection space \cite[Sec.18.5]{ei:book}, from which we can conclude that the ideal $I\subset A$ is radical. 

Let $\Sigma_I$ denote the set of singular points of $\Omega_I$. The space $\Omega_I^0:=\Omega_I-\Sigma_I$ is a smooth submanifold of $\ball^m$ and inherits a natural riemannian metric $\sigma_I$ from the one on $\mathbb{B}^m$. Let $dV_I$ be the volume element on $\Omega_I^0$ defined by this metric $\sigma_I$. Consider the operator $D_N=\bar{\partial}+\bar{\partial}^*$ on the Dolbeault complex of $\Omega^0_I$ with the Neumann boundary condition. 

As $\partial \Omega_I$ is smooth and strongly pseudoconvex, the restriction of the complex structure to the boundary defines a $CR$-structure and therefore a spin$^c$ structure on $\partial \Omega_I$. The Dirac operator $D_{\partial \Omega_I}$ associated to this spin$^c$-structure is a  fundamental class of $K_1(\partial \Omega_I)$. The following theorem generalizes the results in \cite{bdt:boundary}. 
\begin{theorem}\label{thm:index}Under Assumption \ref{ass:principal}, the operator $D_N$ gives a relative $K$-homology class in $K_0(\overline{\Omega}_I, \partial \Omega_I)$. And the boundary map $\partial: K_0(\overline{\Omega}_I, \partial \Omega_I)$ maps $[D_N]$ to the fundamental class on $\partial \Omega_I$ which is associated to the $CR$-structure on $\partial \Omega_I$. 
\end{theorem}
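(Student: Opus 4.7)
The plan is to reduce Theorem \ref{thm:index} to the smooth Baum-Douglas-Taylor result by exploiting the fact that Assumption \ref{ass:principal} places the singular set $\Sigma_I$ strictly in the interior of $\overline{\Omega}_I$, bounded away from the strongly pseudoconvex boundary $\partial\Omega_I$. Choose a relatively open tubular neighborhood $U$ of $\partial\Omega_I$ inside $\overline{\Omega}_I$ with $\overline{U}\cap \Sigma_I=\emptyset$, and an open $V\supset \Sigma_I$ with $\overline{V}\cap \partial\Omega_I=\emptyset$, together with a partition of unity $1=\chi_U+\chi_V$. On the smooth pseudoconvex piece $U$, the Neumann operator $\bar{\partial}+\bar{\partial}^*$ satisfies Kohn's subelliptic $1/2$-estimate and the analysis is covered by \cite{bdt:boundary}. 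On $V$ the issue is the $L^2$-Dolbeault complex on a bounded domain with isolated complete-intersection singularities, in the incomplete Euclidean metric $\sigma_I$ restricted from $\mathbb{B}^m$.

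The first main step is therefore to prove that the Neumann realization $D_N$ on $\Omega_I^0$ is essentially self-adjoint with compact resolvent and that the commutators $[D_N(1+D_N^2)^{-1/2},f]$ are compact for $f\in C(\overline{\Omega}_I)$, with those supported away from $\partial\Omega_I$ acting as a class in absolute $K$-homology of $V$. For this, I would invoke the $L^2$-$\bar{\partial}$ theory for isolated complete-intersection singularities (Pardon-Stern, Ruppenthal), which gives finite-dimensional $L^2$-Dolbeault cohomology and a Hilbert complex with closed range in a neighborhood of each $\sigma\in\Sigma_I$. Combining this with Kohn's subelliptic estimate on $U$ via the cutoffs $\chi_U,\chi_V$ yields both the Fredholm property of $D_N$ and the class $[D_N]\in K_0(\overline{\Omega}_I,\partial\Omega_I)$ in the sense of the relative theory of Baum-Douglas \cite{bd:theory}.

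The second step is to compute the image of $[D_N]$ under the boundary map $\partial\colon K_0(\overline{\Omega}_I,\partial\Omega_I)\to K_1(\partial\Omega_I)$. Since $\Sigma_I$ is disjoint from a collar $(-\varepsilon,0]\times\partial\Omega_I$, the construction of $\partial[D_N]$ — compressing a parametrix of $D_N$ to the boundary and extracting the resulting generalized Toeplitz class — is entirely local near $\partial\Omega_I$ and takes place in a smooth strongly pseudoconvex collar. Thus the computation is literally the one performed in \cite{bdt:boundary}: the boundary symbol of $D_N$ is the symbol of the tangential Cauchy-Riemann complex on $\partial\Omega_I$, which in turn is the principal symbol of the spin$^c$ Dirac operator $D_{\partial\Omega_I}$ attached to the induced CR-structure. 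Hence $\partial[D_N]=[D_{\partial\Omega_I}]$.

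The principal obstacle is the first step, namely proving that $D_N$ gives a genuine relative Fredholm module in the presence of the singularities, because the Euclidean metric $\sigma_I$ on $\Omega_I^0$ is incomplete at $\Sigma_I$ and a priori $D_N$ need not be essentially self-adjoint. Verifying that the complete-intersection condition forces the correct self-adjoint extension, and that continuous functions on $\overline{\Omega}_I$ which are merely Lipschitz transverse to $\Sigma_I$ produce compact commutators with $D_N(1+D_N^2)^{-1/2}$, is where the bulk of the technical work must go; once this Hilbert-complex package is in hand, the identification of $\partial[D_N]$ follows from the smooth model via excision near the boundary.
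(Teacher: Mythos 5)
Your proposal is a viable alternative route, but it differs from the paper's in two structurally important ways.

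First, the paper does not localize with a partition of unity on $\Omega_I^0$; it passes to a Hironaka resolution $\pi\colon\widetilde{\Omega}_I\to\Omega_I$, defines $C^\infty(\overline{\Omega}_I)$ via pullback, and then invokes \cite[Thm.~1.1, 1.2]{or:compact-resolvent} to transfer Kohn's compact-solution-operator result from the smooth resolution to $L^{0,q}(\Omega_I^0)$ for $q\geq 1$. This gives the compact resolvent of $D_N D_N^*$, the bounded/compact commutators, and hence Proposition~\ref{prop:kk-class} in one stroke, without any interior/boundary cutoff decomposition. Your plan to treat $U$ by Kohn and $V$ by direct $L^2$-$\bar{\partial}$ theory for isolated complete-intersection singularities (Pardon--Stern, Ruppenthal) is morally equivalent — Ruppenthal's results are exactly what \cite{or:compact-resolvent} packages — but you would then have to glue the two estimates yourself, which is the step \cite{or:compact-resolvent} already does via the resolution. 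So the paper's route is cleaner precisely because it outsources the hard analysis.

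Second, and more substantively, your framing of the "principal obstacle" is not the one the paper confronts. You worry about whether $D_N$ is essentially self-adjoint at $\Sigma_I$ and propose to show the complete-intersection condition "forces the correct self-adjoint extension." This is both stronger than necessary and possibly false: near an interior isolated singularity the minimal and maximal closures of $\bar\partial+\bar\partial^*$ need not coincide, and nothing in Assumption~\ref{ass:principal} obviously rules that out. What the paper does instead (Proposition~\ref{prop:unique-ext}, following \cite[Prop.~2.1, 3.1, 3.3]{bdt:boundary}) is show that $[D_{\min}]=[D_N]=[D_{\max}]$ in $K_0(\overline{\Omega}_I,\partial\Omega_I)$, i.e.\ every closed extension between $D_{\min}$ and $D_{\max}$ represents the \emph{same} relative class. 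That is all one needs, and it is the BDT mechanism for handling exactly this kind of extension ambiguity. If you keep your localization but replace "prove essential self-adjointness" with "prove $[D_{\min}]=[D_{\max}]$," your outline becomes sound.

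Your observation that the boundary map $\partial[D_N]$ is computed locally near $\partial\Omega_I$ is correct and is in fact implicit in the paper's argument: the paper computes $\partial[D_N]=[\ker D_N]=[L^2_a(\Omega_I)]$ and then cites \cite[Prop.~4.5, 4.6]{bdt:boundary}, which are symbol-level (Boutet de Monvel--Guillemin) identifications that only see a collar of the strongly pseudoconvex boundary. So your collar argument and the paper's Bergman-space argument are two descriptions of the same computation, and neither is affected by the interior singularities.
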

We remark that Theorem  \ref{thm:index} holds true in much more general cases than Assumption \ref{ass:principal}. We refer the readers to Remark \ref{rmk:assump-1.6} for the more precise statement. 

Theorem \ref{thm:index} can be viewed as an analytic version of the Grothendieck-Riemann-Roch theorem. When $I$ is a homogeneous ideal of $A=\complex[z_1, \cdots, z_m]$, the zero variety $Z_I$ is invariant under the multiplication by $\complex^*=\complex-\{0\}$. Assume that the origin is the only possible isolated singular point of $Z_I$. $Z_I$ intersects with the unit sphere $\mathbb{S}^{2m-1}$ transversely, and therefore the boundary $\partial \Omega_I$ is a smooth submanifold of $\mathbb{S}^{2m-1}$. The group $S^1$ as the unit circle in $\complex^*$ acts on the unit sphere $\mathbb{S}^{2m-1}$ freely. As $I$ is a homogeneous ideal,  $\partial \Omega_I\subset \mathbb{S}^{2m-1}$ is invariant under the $S^1$-action. Furthermore the class $D_N$ (and $\partial [D_N]$) is $S^1$-equivariant and lives in $K^{S^1}_0(\overline{\Omega}_I, \partial \Omega_I)$ (and $K^{S^1}_1(\partial \Omega_I)$). The quotient space $X_I:=Z_I/S^1$ is an embedded smooth submanifold of $\complex \mathbb{P}^{m-1}$. Let $D_{X_I}$ be the fundamental class in $K_0(X_I)$ associated to the $\bar{\partial}$-operator on $X_I$. In Proposition \ref{prop:even-class}, we  explain that there is a natural isomorphism $\alpha_I$ from $K^{S^1}_1(\partial \Omega_I)$ to $K_0(X_I)$, mapping $\partial [D_N]$ to $[D_{X_I}]$.  
Let $\iota: \partial \Omega_I\hookrightarrow \partial \ball^m$ (and $i:X_I\hookrightarrow \mathbb{CP}^{m-1}$) denote the embedding map. A good understanding of $\iota_*(\partial [D_N])\in K^{S^1}_1(\mathbb{S}^{2m-1})$ will determine $i_*([D_{X_I}])\in K_0(\mathbb{CP}^{m-1})$ completely.
This provides an analytic approach to the Grothendieck-Riemann-Roch theorem for the projective variety $X_I$. In Theorem \ref{thm:index}, we do not assume that the ideal $I$ is homogeneous, and would like to view it as an analytic Grothendieck-Riemann-Roch Theorem.

Let $\rho$ be a defining function on $\Omega_I$, i.e. $\rho<0$ on $\Omega_I$, and $d\rho \ne 0$ on $\partial \Omega_I$. For $s\geq -1$, let $L^2_s(\Omega_I)$ be the ($s$-)weighted $L^2$-space on $\Omega_I$ with the norm defined by
\[
||f||_{s}^2=\int_{\Omega_I-\Sigma_I} |f|^2(-\rho)^s dV_I. 
\]
Let $L^2_{a, s}(\Omega_I)$ (and $L^p_{a,s}(\Omega_I)$) be the weighted Bergman space on $\Omega_I$ consisting of functions in $L^2_{s}(\Omega_I)$ (and $L^p_s(\Omega_I)$) that are  holomorphic on $\Omega_I^0$. $L^2_{a, s}(\Omega_I)$ is naturally a Hilbert $A$-module. Recall that a Hilbert $A$-module $(H, \alpha)$ (i.e. $\alpha: A\to \mathcal{L}(H)$) is essentially normal if $[\alpha(z_i), \alpha(z_j)^*]$ is compact for all $1\leq i, j\leq m$. We have the following corollary from Theorem \ref{thm:index}. 

\begin{corollary}
\label{cor:index} Under Assumption \ref{ass:principal}, for $s\geq 0$, 
\begin{enumerate}
\item The $A$-module $L^2_{a,s}(\Omega_I)$ is essentially normal. 
\item The $A$-module $L^2_{a, s}(\Omega_I)$ defines a $K$-homology class of the smooth boundary $\partial \Omega_I$, which is the fundamental class of $\partial \Omega_I$ defined by the $CR$-structure on $\partial \Omega_I$. 
\end{enumerate}
\end{corollary}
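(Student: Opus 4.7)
The plan is to deduce Corollary \ref{cor:index} from Theorem \ref{thm:index} by identifying the weighted Bergman space with part of the kernel of $D_N$ and then extracting the associated Toeplitz extension. First, I would realize $L^2_{a,s}(\Omega_I)$ as the kernel of the $(0,0)$-component of $\bar{\partial}$ in the weighted Hilbert space with volume form $(-\rho)^s\, dV_I$. Theorem \ref{thm:index} asserts that $D_N$ represents a relative $K$-homology cycle in $K_0(\overline{\Omega}_I,\partial\Omega_I)$, so in particular the Fredholm module data guarantee compactness of $[F,M_p]$ for every polynomial $p$, where $F=D_N(1+D_N^2)^{-1/2}$. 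Since the Bergman projection $P_s$ is the orthogonal projection onto $\ker(D_N)|_{(0,0)}$, this translates into compactness of $[P_s,M_{z_i}]$ as an operator into $L^2_{a,s}(\Omega_I)^{\perp}$. Setting $T^{(s)}_{z_i}=P_sM_{z_i}P_s$, it then follows that $[T^{(s)}_{z_i},(T^{(s)}_{z_j})^*]$ is compact, which is statement (1).

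For statement (2), essential normality yields a Toeplitz extension
\begin{equation*}
0 \longrightarrow \mathcal{K}(L^2_{a,s}(\Omega_I)) \longrightarrow \mathcal{T}_s \longrightarrow C(\partial\Omega_I) \longrightarrow 0,
\end{equation*}
whose class in $\operatorname{Ext}(C(\partial\Omega_I))\cong K_1(\partial\Omega_I)$ is by definition the $K$-homology class determined by the module $L^2_{a,s}(\Omega_I)$. I would then verify, using the picture of the boundary map in the relative $K$-homology sequence of the pair $(\overline{\Omega}_I,\partial\Omega_I)$ along the lines of \cite{bdt:boundary}, that this extension class coincides with $\partial[D_N]$: the Bergman projection splits the Neumann complex into its kernel summand (the Bergman module) and an acyclic remainder, and this splitting is precisely what the $BDF$ realization of the connecting map $\partial$ extracts. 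Invoking Theorem \ref{thm:index} then identifies the class with the fundamental class attached to the $CR$-structure on $\partial\Omega_I$.

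The hard part will be the first step: controlling the Bergman commutators in the presence of the isolated singular set $\Sigma_I$ inside $\Omega_I$. Because the Bergman kernel of a singular complete intersection can be delicate near $\Sigma_I$, one must invoke Assumption \ref{ass:principal}, which keeps $\Sigma_I$ strictly away from $\partial\Omega_I$, so that the boundary-symbol behavior of $D_N$ is governed entirely by the smooth pseudoconvex geometry of $\partial\Omega_I$; the contributions of $\Sigma_I$ are absorbed into the compact ideal already at the level of Theorem \ref{thm:index}. Finally, the weighted case $s>0$ reduces to $s=0$ by a continuous deformation in the parameter $s$, under which both the essential normality property and the induced $K$-homology class are invariant.
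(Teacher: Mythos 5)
Your overall route (extract the Bergman module from the kernel of $D_N$, then read off the Toeplitz extension as $\partial[D_N]$) is the same route the paper takes, but two steps in your chain do not actually go through as stated.

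First, compactness of $[F,M_p]$ with $F=A(1+A^2)^{-1/2}$ does \emph{not} by itself ``translate into'' compactness of $[P_s,M_{z_i}]$: the kernel of $A$ is infinite-dimensional, $F$ vanishes identically there, and $F^2-1$ fails to be globally compact (it is only compact after localizing into the interior, which is what makes this a \emph{relative} cycle). The Bergman projection $P_s$ is therefore not any continuous function of $F$, and the implication you want is precisely the content of \cite[Prop.~4.5, 4.6]{bdt:boundary}, which the paper cites directly. The extra input that makes the argument work is the compact resolvent of $\Box^N_{0,q}$ for $q\geq 1$ (provided here by the Ovrelid--Ruppenthal result on the resolution $\widetilde{\Omega}_I$). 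In fact your presentation is circular: your part (1) invokes the ``translation'' as a standalone fact, while your part (2) invokes BDT for the boundary-map computation --- but the BDT boundary-map computation is exactly what proves the translation. The paper simply applies BDT 4.5, 4.6 once to obtain $\partial[D_N]=[\ker D_N]=[L^2_a(\Omega_I)]=[D_{\partial\Omega_I}]$, and both claims of the corollary (essential normality and identification with the fundamental class) drop out of that single equation. Second, the ``continuous deformation in $s$'' for the weighted case is not a valid argument: compactness of a commutator is not stable under norm perturbation of the modules (a norm limit of non-compact operators can be compact, and vice versa), and $K$-homology invariance under such a deformation is not something you get for free without already knowing each $s$-module is Fredholm. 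The paper's treatment of the weighted case is simply to re-run the same argument for each $s\geq 0$ with the weighted volume form $(-\rho)^s dV_I$, noting that the compact resolvent estimate persists.
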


As an application, we use our index theorem, in particular Corollary \ref{cor:index}, to  study Hilbert modules associated to $I$. 
Let $L^2_a(\mathbb{B}^m)$ be the Bergman space on $\mathbb{B}^m$. The Toeplitz operators make $L^2_a(\ball^m)$ into an essentially normal Hilbert $A$-module.  Observe that the restriction map $R$ maps $f\in L^2_a(\ball^m)$ to a holomorphic function $f|_{\Omega_I}$ on $\Omega_I$.  Let $M$ be the complex codimension of $\Omega_I$ in $\ball^m$. When $\Omega_I$ is smooth and intersects with $\partial \ball^m$ transversely, Beatrous \cite{be:extension} proved that $R$ maps $L^2_a(\ball^m)$ continuously onto $L^2_{a, M}(\Omega_I)$. Using the developments in complex analysis \cite{ha:integral}, \cite{st:integral}, we have the following generalization of Beatrous' result. 

\begin{theorem} (Theorem \ref{thm:extension})\label{thm:restriction}  Under Assumption \ref{ass:principal}, there is a continuous linear operator $E:L^2_{a,M}(\Omega_I)\to L^2_a(\ball^m)$ such that $RE=Id$. Therefore, the restriction operator $R$ maps $L^2_a(\ball^m)$ continuously onto $L^2_{a,M}(\Omega_I)$. 
\end{theorem}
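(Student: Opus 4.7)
The plan is to construct the extension operator $E$ via an explicit integral kernel adapted to the complete intersection structure of $\Omega_I$, combining Beatrous's smooth-case construction \cite{be:extension} with local extensions near the isolated singularities and a holomorphic correction via a $\bar{\partial}$-Koszul argument. The guiding observation is that the singular set $\Sigma_I$ consists of finitely many points in the interior of $\ball^m$, so on any fixed neighborhood of $\Sigma_I$ the weight $(-\rho)^M$ is bounded above and below; the delicate weighted $L^2$ estimates are therefore only required near the smooth strongly pseudoconvex boundary $\partial\Omega_I$, where Beatrous's techniques apply directly.

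Concretely, I would fix small disjoint balls $U_1,\dots,U_N\subset\ball^m$ around the singular points with $\overline{U_j}\cap\partial\Omega_I=\emptyset$. On $\ball^m\setminus\bigcup\overline{U_j}$ the zero variety $\Omega_I$ is smooth, and a Cauchy-Fantappi\`e-type integral kernel built from the generators $p_1,\dots,p_M$ and $\rho$ produces a bounded extension $E_{\mathrm{sm}}\colon L^2_{a,M}(\Omega_I)\to L^2_a(\ball^m\setminus\bigcup\overline{U_j})$ exactly as in \cite{be:extension}. On each $U_j$, since the weight is harmless there, $L^2_{a,M}(\Omega_I\cap U_j)$ is an ordinary Bergman space, and a Koppelman-Henkin integral representation for complete intersections (as developed in \cite{ha:integral}, \cite{st:integral}) yields a bounded local extension $E_j^{\mathrm{loc}}\colon L^2_{a,M}(\Omega_I\cap U_j)\to L^2_a(U_j)$. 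Using a smooth partition of unity $\{\chi_0,\chi_1,\dots,\chi_N\}$ subordinate to the cover, I form the preliminary smooth extension
\[
\widetilde E(f)=\chi_0\,E_{\mathrm{sm}}(f)+\sum_{j=1}^N \chi_j\,E_j^{\mathrm{loc}}(f),
\]
which satisfies $\widetilde E(f)|_{\Omega_I}=f$ and lies in $L^2(\ball^m)$ with norm controlled by $\|f\|_{L^2_{a,M}(\Omega_I)}$, but is only smooth, not holomorphic.

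The hard part is the holomorphic correction. The form $g:=\bar{\partial}\widetilde E(f)$ is compactly supported in the overlap of the cover (away from both $\Sigma_I$ and $\partial\ball^m$), is $\bar{\partial}$-closed, and vanishes on $\Omega_I$ because $\widetilde E(f)=f$ there. One must then solve $\bar{\partial}u=g$ in $L^2(\ball^m)$ with the constraint $u|_{\Omega_I}=0$ and set $E(f):=\widetilde E(f)-u$; the vanishing constraint is exactly what guarantees $RE=\mathrm{Id}$, and it is not automatic from the standard H\"ormander solution. To enforce it, I would exploit that $(p_1,\dots,p_M)$ is a regular sequence (the complete intersection hypothesis of Assumption \ref{ass:principal}): $g$ admits a Koszul-type decomposition $g=\sum_i p_i\,g_i$ with $\bar{\partial}$-closed components on the relevant support, and an inductive $\bar{\partial}$-solution carried out with weighted H\"ormander estimates using a plurisubharmonic weight involving $\sum_i|p_i|^2$ produces $u=\sum_i p_i\,u_i$ with $u|_{\Omega_I}=0$ and $\|u\|_{L^2(\ball^m)}\lesssim\|g\|_{L^2}$. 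Assembling the pieces yields a continuous $E\colon L^2_{a,M}(\Omega_I)\to L^2_a(\ball^m)$ with $RE=\mathrm{Id}$, which by the open mapping theorem gives the claimed surjectivity of $R$.
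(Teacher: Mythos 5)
Your decomposition misses the paper's key simplification: in the Hatziafratis--Beatrous construction, the cut-off function used in building the kernel $\eta_j(z,\zeta)$ of Lemma~\ref{lem:integration} can be chosen supported in a collar of the smooth, strongly pseudoconvex boundary $\partial\Omega_I$, so that $\eta_j(\cdot,\zeta)$ is supported in $\zeta$ entirely away from $\Sigma_I$. The singular points therefore never enter the kernel estimates, which reduce to \cite[Theorem 4.1]{be:extension} applied to a kernel comparable to the Cauchy-type kernel. For holomorphy in $z$, the paper observes that $\eta_j(-,\zeta)$ is holomorphic on a neighborhood $M_I$ of $\Omega_I$ inside $Z_I$, and applies Bungart's theorem \cite[Corollary 12.1]{bungart:extension} on holomorphic extension of vector-valued functions from varieties to extend $\eta_j(-,\zeta)$ holomorphically to all of $\ball^m$. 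Combined with holomorphy of $(1-\sum_i z_i\bar\zeta_i)^{-(k+j)}$ in $z\in\ball^m$, the operator $E_j$ lands directly in $\calo(\ball^m)$. No partition of unity is introduced and no $\bar\partial$-correction is ever needed; the extension is holomorphic from the outset.

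Your alternative has two concrete gaps. First, the local extensions $E_j^{\mathrm{loc}}$ near $\Sigma_I$ are not supplied by \cite{ha:integral} or \cite{st:integral}: those integral formulas are derived under the maximal-rank hypothesis on $(\partial p_i/\partial z_j)$, which fails precisely at the singular points, so some other mechanism (an Ohsawa--Takegoshi-type $L^2$-extension theorem on a singular complete intersection, with control of the residue/adjoint ideal contribution) would have to be invoked and justified. Second, the $\bar\partial$-correction with the constraint $u|_{\Omega_I}=0$ is only an ansatz: a decomposition $g=\sum_i p_i g_i$ does not produce $\bar\partial$-closed components $g_i$, so the Koszul argument has to be run inductively through the full complex with $\bar\partial$-closures at each stage, and the weighted H\"ormander estimates must be carried out with a weight degenerate along $Z_I$ (where $\sum_i|p_i|^2$ vanishes) while keeping the final solution in $L^2(\ball^m)$. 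This is the nontrivial division problem in Bergman spaces; it is plausibly surmountable via Andersson-style residue and division techniques, but as written the sketch does not engage the main difficulties, all of which are side-stepped by the support condition on the kernel and the Bungart extension.
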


By Assumption \ref{ass:principal} and the dimension assumption $k\geq 2$, the Hartogs principle \cite[Ch.III, Ex. 3.5]{ha:book} holds on $\Omega_I$ and states that every holomorphic function on $\Omega_I^0$ is holomorphic on $\Omega_I$. Furthermore, Assumption \ref{ass:principal} plays a key role in the integral formula obtained in \cite{ha:integral}. Hence Assumption \ref{ass:principal} is crucial in Theorem \ref{thm:restriction} for $R$ to be surjective. In general, without Assumption \ref{ass:principal} the restriction map $R$ may fail to be surjective (c.f. Sec. \ref{subsec:ass}). We plan to study some cases in the near future when the range of $R$ has finite codimension.  

In Theorem \ref{thm:closure}, with Kai Wang we prove that, under Assumption \ref{ass:principal}, the kernel of the map $R$ is the closure $\bar{I}$ of $I$ in $L^2_a(\ball^m)$. Hence we have an exact sequence of Hilbert $A$-modules
\begin{equation}\label{eq:ex-sq}
0\longrightarrow \bar{I}\longrightarrow L^2_a(\ball^m)\longrightarrow L^2_{a, M}(\Omega_I)\longrightarrow 0.
\end{equation}
As the last two $A$-modules in the exact sequence (\ref{eq:ex-sq}) are essentially normal, we obtain the following theorem. We refer the reader to \cite{do-wa:quotient}, and \cite{gw:essential-normal2}--\cite{gz:quasi-homogeneous} for related results.

\begin{theorem}\label{thm:geometrization} (Theorem \ref{thm:essentially-normal}) Under Assumption \ref{ass:principal}, both $\bar{I}$ and the quotient $Q_I:=L^2_a(\ball^m)/\bar{I}$ are essentially normal $A$-modules. Furthermore, $Q_I$ and $L^2_{a,M}(\Omega_I)$ correspond to the same class in $K_1(\partial \Omega_I)$. 
\end{theorem}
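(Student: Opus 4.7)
The backbone of the argument is the short exact sequence~(\ref{eq:ex-sq}), which by Theorems~\ref{thm:restriction} and~\ref{thm:closure} realizes the restriction $R:L^2_a(\ball^m)\to L^2_{a,M}(\Omega_I)$ as a surjective $A$-module map with kernel $\bar I$. The induced quotient map $\tilde R:Q_I\to L^2_{a,M}(\Omega_I)$ is therefore a bounded $A$-module bijection, hence a bounded-invertible (but in general non-isometric) similarity of Hilbert $A$-modules by the open mapping theorem.

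Essential normality is then established in two steps. First, since $L^2_{a,M}(\Omega_I)$ is essentially normal by Corollary~\ref{cor:index} and similarity by a bounded invertible operator preserves compactness of commutators, $\tilde R$ transports essential normality to $Q_I$. Second, I decompose the Toeplitz operators on $L^2_a(\ball^m)$ with respect to $L^2_a(\ball^m)=\bar I\oplus \bar I^{\perp}$, using that $\bar I$ is invariant under multiplication by $z_i$, and write
\[
T_{z_i}=\begin{pmatrix} S_i & A_i\\ 0 & R_i\end{pmatrix},\qquad A_i=P_{\bar I}\,T_{z_i}\,(1-P_{\bar I}).
\]
A direct block computation shows that the $(1,1)$ and $(2,2)$ entries of $[T_{z_i},T_{z_j}^{*}]$ are, respectively,
\[
[S_i,S_j^{*}]+A_iA_j^{*}\qquad\text{and}\qquad [R_i,R_j^{*}]-A_j^{*}A_i.
\]
These are compact because $L^2_a(\ball^m)$ is essentially normal by Arveson's theorem, and the first step gives compactness of $[R_i,R_j^{*}]$. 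Setting $i=j$ in the second expression shows that $A_i^{*}A_i$, and hence $A_i$ itself, is compact; substituting back into the first expression yields compactness of $[S_i,S_j^{*}]$, proving that $\bar I$ is essentially normal.

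To identify the $K$-homology classes, I use the polar decomposition $\tilde R=U|\tilde R|$, where $U:Q_I\to L^2_{a,M}(\Omega_I)$ is unitary and $|\tilde R|$ is positive invertible on $Q_I$. The intertwining relation $\tilde R\, T_{z_i}^{Q_I}=T_{z_i}^{L^2_{a,M}}\tilde R$ becomes $U^{*}T_{z_i}^{L^2_{a,M}}U=|\tilde R|\,T_{z_i}^{Q_I}\,|\tilde R|^{-1}$, so the two essentially normal $A$-module structures on the common underlying Hilbert space agree after a unitary transport followed by conjugation by $|\tilde R|$. Both operations preserve the Busby invariant of the associated Toeplitz $C^{*}$-extension, so the classes in $K_1(\partial\Omega_I)$ coincide; combined with Corollary~\ref{cor:index}, this shows $[Q_I]=[L^2_{a,M}(\Omega_I)]$ is the fundamental class of $\partial\Omega_I$.

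The delicate point is the invariance under the non-unitary similarity by $|\tilde R|$, which genuinely rescales the Hilbert norm rather than perturbing by a compact. Handling it cleanly requires either invoking the Brown--Douglas--Fillmore fact that similar essentially normal extensions are unitarily equivalent modulo compacts, or verifying directly that $[|\tilde R|,T_{z_i}^{Q_I}]$ is compact via the functional calculus of the self-commutators. This is the step that will demand the most care.
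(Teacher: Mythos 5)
There is a genuine gap at the very first step of your essential-normality argument. You claim that ``similarity by a bounded invertible operator preserves compactness of commutators,'' and use this to transfer essential normality from $L^2_{a,M}(\Omega_I)$ across $\tilde R$ to $Q_I$. This is false in general: if $X$ is bounded invertible and $T$ essentially normal, $X^{-1}TX$ need not be essentially normal, because $(X^{-1}TX)^*=X^*T^*X^{-*}$ involves $X^*$ rather than $X^{-1}$, and the mixed terms do not cancel. Concretely, conjugating the unilateral shift $S$ on $\ell^2$ by the invertible diagonal operator $X=\mathrm{diag}(1,2,1,2,\dots)$ produces a weighted shift with weights alternating between $1/2$ and $2$, whose self-commutator is diagonal with entries alternating $\pm 15/4$, hence not compact. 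So conjugation by a bounded invertible intertwiner can destroy essential normality, and your step~1 does not go through. Everything downstream of it -- the deduction that $[R_i,R_j^*]$ is compact, and hence via the block computation that $\bar I$ is essentially normal -- therefore has no foundation.

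The reason similarity arguments can work \emph{after the fact}, as the paper's Proposition~\ref{prop:equivalence} shows, is that when \emph{both} modules are already known essentially normal, Fuglede--Putnam applied in the Calkin algebra forces the positive part $|\tilde R|$ to commute with the module action modulo compacts, whence the similarity is implemented by a unitary modulo compacts. But this uses essential normality of both sides as an input; it cannot be used to establish essential normality of one side from the other alone. The paper sidesteps your trap by reordering: it first applies the two-out-of-three principle to the exact sequence $0\to\ker R\to L^2_a(\ball^m)\to L^2_{a,M}(\Omega_I)\to 0$, in which \emph{both} the middle and right modules are already known essentially normal, to conclude that $\ker R=\bar I$ is essentially normal (via \cite[Thm.~1]{do:index}); it then applies the same principle to $0\to\bar I\to L^2_a(\ball^m)\to Q_I\to 0$ to get $Q_I$ essentially normal (via \cite[Thm.~1]{do:module}). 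Your block computation on $L^2_a(\ball^m)=\bar I\oplus\bar I^\perp$ is precisely the mechanism behind these citations and is correct as a derivation -- it just has to be run in the order that respects which pair of modules is known essentially normal at each stage, namely deducing $[S_i,S_j^*]$ and then the $A_i$ compact from the \emph{first} sequence, not assuming $[R_i,R_j^*]$ compact at the outset. Finally, your treatment of the $K$-homology identification via polar decomposition and the ``delicate point'' you flag about the non-unitary factor $|\tilde R|$ is exactly right in spirit: once both modules are essentially normal, Proposition~\ref{prop:equivalence} supplies the Fuglede--Putnam argument you anticipated, showing $|\tilde R|$ commutes with the module action modulo compacts and that the two Busby invariants, hence the two classes in $K_1(\partial\Omega_I)$, coincide.
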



Theorem \ref{thm:geometrization} confirms the conjecture by Arveson \cite{ar:conjecture} and the first author \cite{do:index} that the ideal $\bar{I}$ is  an essentially normal $A$-module when $I$ satisfies  Assumption \ref{ass:principal}. We refer to Remark \ref{rmk:schatten-p} for the discussion on the $p$-summability of the modules. This also suggests a good candidate for a fundamental class in $K_1(\partial \Omega_I)$ (and $K_0(X_I)$) when $\partial \Omega_I$ (and $X_I=\partial \Omega_I/S^1$) is not smooth. In algebraic geometry, when the zero variety $\Omega_I$ (and $X_I$) is not smooth, the Grothendieck-Riemann-Roch theorem usually involves resolutions of singularities by Hironaka's famous theorem \cite{hi:resolution}. Notice that resolutions \cite{hi:resolution} of a singularity variety are not unique. Therefore, it is hard to talk about a fundamental class on $X_I$ from the algebraic geometric point of view. We observe that the  quotient $A$-module $Q_I$ is always well defined without any requirements on $\Omega_I$. Theorem \ref{thm:geometrization} suggests that $Q_I=L^2_a(\ball^m)/\bar{I}$ may in general be a fundamental class in $K_1(\partial \Omega_I)$.

\begin{remark}\label{rmk:assumption} 
\begin{enumerate}
\item Assumption \ref{ass:principal}  can be weakened. For example, a natural case is that $I$ is generated by a finite number of holomorphic functions that are defined in a neighborhood of the closed ball $\overline{\ball}^m$ satisfying Assumption \ref{ass:principal} . All results in this article naturally extend to this case. We suggest the reader to compare our results to \cite{do-wa:quotient}, where $p$ is required to be a polynomial. 
\item Under Assumption \ref{ass:principal}, the ideal $I$ is radical. The extensions of our results to non radical ideals will be reported in the near future. 
\end{enumerate}
\end{remark}

This paper is organized as follows. In Sec. \ref{sec:resolution}, we will review some background knowledge about resolution of singularities, which is crucial in our proofs. In particular, we will explain the construction of the Hilbert space $L^2_{a,s}(\Omega_I)$. In Sec. \ref{sec:index}, we will present the construction of the $K$-homology class $D_N$, and prove Theorem \ref{thm:index}. In Sec. \ref{sec:restriction}, we will prove Theorem \ref{thm:restriction} about the restriction map $R$ and Theorem \ref{thm:geometrization} about the $A$-module structures on the ideal $\bar{I}$ and the quotient $Q_I$. We end this article with two remarks in Sec. \ref{sec:rmk}. In the Appendix  we prove together with Kai Wang that under Assumption \ref{ass:principal}, the closure $\overline{I}$ of $I$ in $L^2_a(\ball^m)$ agrees with the kernel of the operator $R$. \vskip 2mm

\noindent{\bf Acknowledgments}: In the middle of typing up this article, we received the preprint \cite{ee:geometric}. We would like to thank Miroslav Engli\v{s} and J\"org Eschmeier for sending us \cite{ee:geometric} and also discussions over emails. The results in \cite{ee:geometric} are deeply connected to results in this paper but also of independent interests with different approaches.  We refer to Remark \ref{rmk:ee} for more details. We would like to thank Paul Baum for discussions on the Grothendieck-Riemann-Roch theorem, and Jerome Kaminker for many discussions on index theory, and Kunyu Guo for bringing our attention to the issue about the closure of the ideal $I$ in $L^2_a(\ball^m)$. The second author would like to thank John McCarthy for pointing out Beatrous' work \cite{be:extension}, and suggesting the extension of Beatrous' work using the results in \cite{st:integral}, and Mohan Kumar for discussions about Assumption \ref{ass:principal} and the Grothendieck-Riemann-Roch theorem. All authors are partially supported by NSF. The second author is also partially supported by NSA. 
\section{Integration on the zero variety}\label{sec:resolution}

In this section, we provide a preliminary review of resolution of singularities and the construction of the Hilbert space $L^2_{a,s}(\Omega_I)$. 
\subsection{Resolution of singularities}
We recall some useful properties about resolution of $\Omega_I$. Hironaka \cite{hi:resolution} proved that every algebraic variety $V$ over $\mathbb{C}$ has a resolution. We will use the resolution method to study the zero variety $\Omega_I$.  More explicitly, there is a smooth manifold $\widetilde{\Omega}_I$ with a proper holomorphic surjection $\pi: \widetilde{\Omega}_I\to \Omega_I$ with the following properties:
\begin{enumerate}
\item  The exceptional set $E_I:=\pi^{-1}(\Sigma_I)$ is a hypersurface in $\Omega_I$ with (possible) ``normal crossing singularities" only.
\item  The restriction of $\pi: \widetilde{\Omega}_I-E_I\to \Omega_I-\Sigma_I$ is a biholomorphism. 
\end{enumerate}
The pullback $\pi^*\sigma_I$ is a positive semidefinite metric on $\widetilde{\Omega}_I$ degenerated on $E_I$. The pullback $\pi^*dV_I$ is a volume element on $\widetilde{\Omega}_I$ that vanishes on $E_I$. We choose a hermitian metric $\sigma$ on $\widetilde{\Omega}_I$. And denote $dV_\sigma$ to be the associated volume element on $\widetilde{\Omega}$. Define $d_{E_I}(x)$ to be the distance function on $\widetilde{\Omega}_I$ from $x$ to the exceptional subset $E_I$. In \cite[Eq. (9)]{fov:integral}, it is proved that there are positive constant $c, C, M$ such that 
\begin{equation}\label{eq:volume-estimate}
c d_{E_I}(x)^M dV_\sigma\leq \pi^*dV_I\leq C dV_\sigma,\ \ \text{on}\  \widetilde{\Omega}_I-E_I. 
\end{equation}

Without loss of generality, we may assume that $E_I$ is a divisor with only normal crossing, i.e. the irreducible components of $E$ are regular and meet complex transversely.  As is explained in \cite[Sec. 3]{or:compact-resolvent}, by Eq. (\ref{eq:volume-estimate}) there is an effective divisor $D$ of $\widetilde{\Omega}_I$ that is supported on $E_I$ such that  for $(p,q)$-forms $\Omega^{p,q}$ on $U$, 
\begin{equation}\label{eq:embedding}
L^2 (U,dV_\sigma, \Omega^{p,q}\otimes L_{-D})\subset L^2(U, \pi^*d V_I, \Omega^{p,q})\subset L^2(U, dV_\sigma, \Omega^{p,q}\otimes L_D),
\end{equation}
for $U$ a neighborhood of $E_I$ in $\widetilde{\Omega}_I$. 


\subsection{Weighted Bergman space} Let $L^2_{a,s}(\Omega_I)$ be the space of holomorphic functions on $\Omega_I-\Sigma$ that are square integrable with respect to the measure 
$(-\rho)^s dV_I$.  
\begin{lemma}\label{lem:bergman-space} $L^2_{a,s}(\Omega_I)$ is a closed subspace of $L^2_s(\Omega_I)$, and therefore a Hilbert space. 
\end{lemma}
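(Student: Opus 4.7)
The plan is to use the classical argument that holomorphic functions satisfy a local mean-value/sub-mean estimate, which in turn converts $L^2_{\text{loc}}$ convergence into locally uniform convergence, so the limit stays holomorphic.

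First I would take a Cauchy sequence $\{f_n\}\subset L^2_{a,s}(\Omega_I)$. Since $L^2_s(\Omega_I)$ is a Hilbert space in its own right, $\{f_n\}$ converges in norm to some $f\in L^2_s(\Omega_I)$, and the task is to show that $f$ agrees almost everywhere with a function holomorphic on $\Omega_I^0=\Omega_I-\Sigma_I$. This is equivalent to verifying that the (a.e.\ defined) restriction $f|_{\Omega_I^0}$ is holomorphic.

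Next I would localize. Fix any compact $K\subset \Omega_I^0$. Because $K$ is disjoint from $\partial\Omega_I\cup\Sigma_I$ and $\rho$ is a smooth defining function with $\rho<0$ on $\Omega_I$, the quantity $(-\rho)^s$ is bounded below by some $c_K>0$ on $K$ (for any real exponent $s$, since $-\rho$ is bounded between two positive constants on $K$). Therefore
\[
\int_K |f_n-f_m|^2\, dV_I \ \leq\ c_K^{-1}\int_K |f_n-f_m|^2(-\rho)^s\, dV_I \ \longrightarrow\ 0,
\]
so $\{f_n\}$ is Cauchy in $L^2(K,dV_I)$ and $f_n\to f$ in $L^2_{\text{loc}}(\Omega_I^0, dV_I)$.

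Now I would invoke the holomorphic sub-mean-value inequality on $\Omega_I^0$. Since $\Omega_I^0$ is a smooth complex manifold of dimension $k$, any point $p\in\Omega_I^0$ has a neighborhood biholomorphic to a polydisc in $\complex^k$ on which the riemannian volume $dV_I$ is comparable to Lebesgue measure. Standard Bergman theory then gives, for every compact $K\subset\Omega_I^0$ and every slightly larger compact neighborhood $K'$, a constant $C_{K,K'}$ such that $\sup_K |g|\le C_{K,K'}\|g\|_{L^2(K',dV_I)}$ for all $g$ holomorphic on a neighborhood of $K'$. Applying this to $f_n-f_m$ shows that $\{f_n\}$ is locally uniformly Cauchy on $\Omega_I^0$; the limit is therefore a holomorphic function $g$ on $\Omega_I^0$. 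Since locally uniform convergence implies $L^2_{\text{loc}}$ convergence, $g$ must coincide with $f$ a.e.\ on $\Omega_I^0$, hence a.e.\ on $\Omega_I$ (as $\Sigma_I$ has measure zero), so $f\in L^2_{a,s}(\Omega_I)$.

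I do not anticipate a serious obstacle: the argument is entirely local on $\Omega_I^0$ and avoids the singular set $\Sigma_I$ and the boundary $\partial\Omega_I$ completely, since the weight $(-\rho)^s$ is harmless on compacta of $\Omega_I^0$. The only point that requires a sentence of care is the sub-mean-value inequality on the complex manifold $\Omega_I^0$, but this follows at once by choosing holomorphic charts.
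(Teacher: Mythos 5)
Your proof is correct, but it takes a genuinely different and more elementary route than the paper. The paper pulls everything back to a resolution of singularities $\pi:\widetilde{\Omega}_I\to\Omega_I$, sandwiches $L^2_{a,s}(\Omega_I)$ between $\bar{\partial}$-kernels twisted by the divisors $L_{\pm D}$ supported on the exceptional set, and then invokes coherent sheaf theory to show the inclusion $\ker\bar{\partial}|_{L_{-D}}\subset\ker\bar{\partial}|_{L_D}$ has finite codimension. Closedness of $L^2_{a,s}(\Omega_I)$ is extracted as a corollary of this finer structural information (in fact the argument also records, as a byproduct, the comparison with the twisted Bergman spaces on $\widetilde{\Omega}_I$, which is in the same spirit as the machinery the paper reuses later in Section~3). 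Your argument bypasses the resolution entirely: since the definition of $L^2_{a,s}(\Omega_I)$ only asks for holomorphicity on the smooth locus $\Omega_I^0=\Omega_I-\Sigma_I$, you restrict attention to compacta $K\subset\Omega_I^0$, observe that the weight $(-\rho)^s$ is pinched between positive constants there (for any real $s$), so $L^2_s$-Cauchy implies $L^2_{\rm loc}(\Omega_I^0)$-Cauchy, and then apply the sub-mean-value/Bergman estimate in holomorphic charts to upgrade to locally uniform convergence and conclude the limit is holomorphic. This is the classical Bergman-space closedness argument, and it is fully valid here because all the analytic difficulties (the singular set and the boundary) are avoided by working on interior compacta of $\Omega_I^0$. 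Your route is shorter and self-contained for the lemma as stated; the paper's route establishes additional comparison facts on the resolution that have independent value for the rest of the paper, but they are not needed merely to prove closedness.
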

\begin{proof}Let $L^2_{s}(\Omega_I)$ be the $s$-weighted $L^2$-space on $\Omega_I$ with respect to $(-\rho)^sdV_I$.  Pull back the space $L^2_{s}(\Omega_I)$  to the resolution $\widetilde{\Omega}_I$.  The pullback map is an isomorphism from $L^2_{s}(\Omega_I)$ to $L^2_s(\widetilde{\Omega}_I, \pi^*dV_I)$. By the inequality (\ref{eq:volume-estimate}) and (\ref{eq:embedding}), we have the following inclusion
\begin{equation}\label{eq:inclusion-L2}
L_s^2(\widetilde{\Omega}_I, dV_\sigma, L_{-D})\subset L_s^2(\widetilde{\Omega}_I, \pi^* dV_I)\subset L_s^2(\widetilde{\Omega}_I, dV_\sigma, L_D). 
\end{equation}

Consider the $\bar{\partial}$-equation $\bar{\partial}\varphi=0$ on $\widetilde{\Omega}_I$. Let $\ker \bar{\partial}|_{L_{-D}}$ (and $\ker\bar{\partial}|_{L_D}$) be the solution space of the $\bar{\partial}$-operator in $L_s^2(\widetilde{\Omega}_I, dV_\sigma, L_{-D})$ (and $L_s^2(\widetilde{\Omega}_I, dV_\sigma, L_D)$). Let $\ker\bar{\partial}|_{\Omega_I}$ be the  solution space of  the $\bar{\partial}|_{\Omega_I}$ operator in $L_s^2(\Omega_I)$ and therefore $L_s^2(\widetilde{\Omega}_I, \pi^* dV_I)$.  The following inclusion follows directly from  (\ref{eq:inclusion-L2}),
\[
\ker\bar{\partial}|_{L_{-D}}\subset \ker\bar{\partial}|_{\Omega_I}\subset \ker \bar{\partial}|_{L_D}.
\]

Next consider the the inclusion map
\[
i_*: \ker\bar{\partial}|_{L_{-D}}\subset \ker \bar{\partial}|_{L_D}. 
\]
Following the proof of \cite[Theorem 3.1]{or:compact-resolvent}, on a pseudoconvex neighborhood $W$ of a connected component of $E_I$, one has the following exact sequence
\[
0\longrightarrow \Gamma(W, L_{{-D}})\stackrel{i_*|_W}{\longrightarrow} \Gamma(W, L_D)\longrightarrow \Gamma(W, Q), 
\]
where $Q:=L_D/\big(i_*L_{-D}\big)$ is a coherent analytic sheaf with compact support in $W$.  As $Q$ is compactly supported, $\Gamma(W, Q)$ is of finite dimension. Therefore, $i_*|_W\big(\Gamma(W, L_{-D})\big)$ is of finite codimension in $\Gamma(W, L_D)$.  Globally, as $E_I$ only has finitely many components, $i_*:  \ker\bar{\partial}|_{L_{-D}}\subset \ker \bar{\partial}|_{L_D}$ is of finite codimension.  

As $ \ker\bar{\partial}|_{\Omega_I}$ is between $ \ker\bar{\partial}|_{L_{-D}}$ and $\ker \bar{\partial}|_{L_D}$, $L^2_{a,s}(\Omega_I)\cong L^2_{a,s}(\widetilde{\Omega}_I, \pi^* dV_I)$ is of finite codimension in $L^2_{a,s}(\widetilde{\Omega}_I, dV_\sigma, L_D)$. Therefore, $L^2_{a,s}(\Omega_I)$ is a closed subspace of $L^2_{a,s}(\widetilde{\Omega}_I, dV_\sigma, L_D)$ of finite codimension. As $L^2_{a,s}(\widetilde{\Omega}_I, dV_\sigma, L_D)$ is closed in $L^2_s(\widetilde{\Omega}_I, dV_\sigma, L_D)$, we conclude that $L^2_{a,s}(\Omega_I)$ is a closed subspace of $L^2_s(\Omega_I)=L^2_s(\widetilde{\Omega}, \pi^*dV_I)$ from Eq. (\ref{eq:inclusion-L2}), and therefore a Hilbert space. 
\end{proof}

\begin{remark}Similar arguments to the proof of Lemma \ref{lem:bergman-space} confirm that $L^p_{a,s}(\Omega_I)$ is a Banach space for $p\geq 1, s\geq -1$. 
\end{remark}

\section{An odd index theorem for analytic space with isolated singularity}\label{sec:index}
In this section, we explain the construction of the operator $D_{N}$ on $\Omega_I^0$ with the Neumann boundary condition and present the proof of Theorem \ref{thm:index}.  

In \cite{boutet:index}, Boutet de Monvel proved an index theorem for Toeplitz operators on a complex manifold with strongly pseudoconvex boundary.  Our result can be viewed as an extension of Boutet de Monvel's theorem to complex manifolds with isolated singularities. Such an extension was hinted by Boutet de Monvel \cite{boutet:index} and an approach was explained to the second author \cite{boutet:private}.  In the following development, we will take a different route by following the relative $K$-homology theory developed by Baum-Douglas-Taylor \cite{bdt:boundary}.  For simplicity, we will present our proofs below with the standard volume $dV_I$ on $\Omega_I$. The same results also hold true for the weighted volume element $(-\rho)^s dV_I$ ($s\geq 0$) with similar arguments. We also point out that although we have used the notation $\Omega_I$, the results in this section hold true for more general complex analytic spaces (See Remark \ref{rmk:assump-1.6}). 

\subsection{The $\bar{\partial}$-equation on $\Omega^0_I$} 

Let $\Omega^0_I$ denote $\Omega_I-\Sigma_I$. Let $\wedge^{p,q}T^*\Omega^0_I$ be the degree $(p,q)$ subbundle of $\wedge^{p+q}T^*\Omega^0_{I}$.  Recall that $\sigma_I$ is the subspace metric on $\Omega^0_I\subset \ball^m$. Denote $L^{p,q}(\Omega^0_I)$ to be the $L^2$-space on $\Omega^0_I$ associated to the metric $\sigma_I$. Consider the $\bar{\partial}_{p,q}$-operator on $L^{p,q}(\Omega^0_I)$
\[
\bar{\partial}_{p,q}: L^{p,q}(\Omega^0_I)\to L^{p, q+1}(\Omega^0_I), 
\] 
and its adjoint operator
\[
\bar{\partial}_{p,q}^*: L^{p,q+1}(\Omega^0_I)\to L^{p,q}(\Omega^0_I). 
\]

Let $r$ be a real valued function smooth in a neighborhood of $\partial \Omega_I$ satisfying $r=0$ on $\partial \Omega_I$ and $dr\ne 0$ on $\partial \Omega_I$. Define the $\bar{\partial}$-Neumann boundary condition by 
\[
\mathcal{D}^{p,q}:=\{ \xi \in \Gamma(\wedge^{p,q}T^*(\overline{\Omega}_I-\Sigma_I): (\bar{\partial} r)\lrcorner \xi=0\ \text{on}\ \partial \Omega_I\}.
\]
When $q=0$, the $\bar{\partial}$-Neumann boundary condition is trivial. Let $\bar{\partial}^N_{p,q}$ denote the closure of $\bar{\partial}_{p,q}$ restricted to $\mathcal{D}^{p,q}$. Consider the Laplace operator by 
\[
\Box^N_{p,q}=\bar{\partial}^N_{p,q-1}(\bar{\partial}^N_{p,q-1})^*+(\bar{\partial}^N_{p,q})^*\bar{\partial}^N_{p,q}: L^{p,q}(\Omega^0_I)\longrightarrow L^{p,q}(\Omega^0_I). 
\]
Since $\partial \Omega_I$ is strongly pseudoconvex, the Laplace operator $\Box^N_{0,q}$ ($q\geq 1$) with the $\bar{\partial}$-Neumann boundary condition has compact  resolvent on the resolution $\widetilde{\Omega}_I$. Hence, \cite[Theorem 1.1]{or:compact-resolvent} implies that for $q\geq 1$, 
\[
\Box^N_{0,q}: L^{0,q}(\Omega^0_I)\to L^{0,q}(\Omega^0_I)
\]
has compact resolvent.  

We remark that when the dimension of $\Omega_I$ is greater than or equal to 2, under Assumption \ref{ass:principal} the Hartogs principle \cite[Ch.III, Ex.3.5]{ha:book} implies that every function in $\ker \bar{\partial}^N_{0,0}=L^2_a(\Omega_I)$ is holomorphic on the whole $\Omega_I$.

\subsection{A Hilbert module}

In this section, following \cite{bdt:boundary}, we construct a $K$-homology class $D_N$ in $K_0(\overline{\Omega}_I, \partial \Omega_I)$.  

Let $H_0(\Omega_I^0)=\oplus_{q\geq 0} L^{0,2q}(\Omega^0_I)$ (and $H_1(\Omega_I^0)=\oplus_{q\geq 0} L^{0, 2q+1}(\Omega^0_I)$) be the Hilbert space of $(0,even)$ (and $(0,odd)$) forms on $\Omega^0_I$.  We consider the differential operator 
\[
D_N:= \bar{\partial}^N+(\bar{\partial}^N)^*: H_0(\Omega_I^0)\longrightarrow H_1(\Omega^0_I).
\]
The operator $D_N$ is a first order differential operator on $\Omega_I$. Denote $\sigma_{D_N}(x, \xi)$ to be the symbol of $D_N$. Let $\mathcal{D}(D_N)$ denote the domain of $D_N$ in $H_0(\Omega_I^0)$. 

Let $C(\overline{\Omega}_I)$ be the $C^*$-algebra of continuous functions on the closure $\overline{\Omega}_I$. Consider the multiplication of $C(\overline{\Omega}_I)$ on $H_0$ and $H_1$. Denote the corresponding $*$-representations by $\sigma_0: C(\overline{\Omega}_I)\to \mathcal{L}(H_0)$ and $\sigma_1: C(\overline{\Omega}_I)\to \mathcal{L}(H_1)$. 

\begin{proposition}\label{prop:kk-class}The graded Hilbert space $H:=H_0\oplus H_1$, and the representation 
\[
\sigma:=\sigma_0\oplus \sigma_1: C(\overline{\Omega}_I)\longrightarrow \mathcal{L}(H), 
\]
and the operator 
\[
A=\left(\begin{array}{cc}0&D_N^*\\ D_N&0\\ \end{array}\right)
\]
form an unbounded Kasparov module for $C(\overline{\Omega}_I)$ and its ideal $C_0(\overline{\Omega}_I)$ which consists of functions in $C(\overline{\Omega}_I)$ vanishing at the boundary $\partial \Omega_I$. 
\end{proposition}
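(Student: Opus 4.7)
The plan is to verify the three defining axioms of an unbounded Kasparov $(C(\overline{\Omega}_I), C_0(\overline{\Omega}_I))$-module: that $A$ is self-adjoint (and regular) on its natural domain; that $\sigma(f)(A \pm i)^{-1}$ is compact for every $f \in C_0(\overline{\Omega}_I)$; and that $[A, \sigma(f)]$ extends to a bounded operator for $f$ in a norm-dense $*$-subalgebra of $C(\overline{\Omega}_I)$.

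First, I would handle self-adjointness. By construction each $\bar{\partial}^N_{0,q}$ is closed on its Neumann domain, so $D_N$ assembles into a closed, densely defined operator $H_0 \to H_1$, and $D_N^*$ is its Hilbert-space adjoint going $H_1 \to H_0$. For any such $(T, T^*)$ the off-diagonal operator
\[
\begin{pmatrix} 0 & T^* \\ T & 0 \end{pmatrix}
\]
is self-adjoint on $\mathcal{D}(T) \oplus \mathcal{D}(T^*)$; applying this with $T = D_N$ gives self-adjointness of $A$ on $H = H_0 \oplus H_1$.

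Second, for the compact-resolvent condition I would compute
\[
A^2 = \begin{pmatrix} D_N^* D_N & 0 \\ 0 & D_N D_N^* \end{pmatrix}.
\]
Because $\bar{\partial}^2 = 0$, the diagonal entries restrict on each $L^{0,q}(\Omega_I^0)$ to the Neumann Laplacian $\Box^N_{0,q}$. For $q \geq 1$ the compactness of the resolvent of $\Box^N_{0,q}$ cited from \cite{or:compact-resolvent} immediately gives that $(A^2+1)^{-1}$ is compact on $\bigoplus_{q \geq 1} L^{0,q}(\Omega_I^0)$. On the remaining summand $L^{0,0}(\Omega_I^0)$ the Hartogs principle (applicable because $\dim_{\mathbb{C}} \Omega_I \geq 2$ under Assumption \ref{ass:principal}) identifies $\ker \bar{\partial}^N_{0,0}$ with $L^2_a(\Omega_I)$, so $(A^2+1)^{-1} - P$ is compact, where $P$ is the Bergman projection. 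It therefore suffices to show $\sigma(f) P$ is compact for $f \in C_0(\overline{\Omega}_I)$. Transferring the problem to the resolution via $\pi: \widetilde{\Omega}_I \to \Omega_I$ and using the $L^2$-inclusions (\ref{eq:inclusion-L2}) reduces this to a Toeplitz-compactness statement on a smooth manifold with strongly pseudoconvex boundary, which is classical; the singular locus $\Sigma_I$ contributes only a compact perturbation because it is of measure zero and holomorphic $L^2$-functions extend holomorphically across it.

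Third, for boundedness of commutators I would take $C^\infty(\overline{\Omega}_I)$ as the dense $*$-subalgebra. The commutator $[\bar{\partial}, f]$ acts as exterior multiplication by the bounded smooth $(0,1)$-form $\bar{\partial} f$, and $[\bar{\partial}^*, f]$ by its dual contraction; both are pointwise bounded with respect to the metric $\sigma_I$ on $\Omega_I^0$. Moreover, multiplication by $f$ preserves the pointwise boundary condition $(\bar{\partial} r) \lrcorner\, \xi = 0$ defining $\mathcal{D}^{p,q}$, so these commutators extend to bounded operators on $H$. The main technical obstacle is the compactness of $\sigma(f) P$ for $f \in C_0(\overline{\Omega}_I)$: near the smooth strongly pseudoconvex boundary this is standard, but reconciling it with the behavior at the isolated singularities $\Sigma_I$ requires careful use of the resolution and the weighted $L^2$-comparisons recorded in Section \ref{sec:resolution}.
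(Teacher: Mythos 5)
Your overall framing (verify self-adjointness, boundedness of commutators on a dense subalgebra, and compactness of $\sigma(f)(A\pm i)^{-1}$ for $f$ in the \emph{ideal} $C_0(\overline{\Omega}_I)$) is the right one, and your self-adjointness and $A^2$-decomposition steps are correct. But the compactness step has a genuine gap. After reducing to showing that $\sigma(f)P$ is compact for $f\in C_0(\overline{\Omega}_I)$ ($P$ the projection onto $\ker D_N\cong L^2_a(\Omega_I)$), you assert this by invoking a "classical" Toeplitz-compactness fact on the resolution and dismissing the singular locus as a "compact perturbation because it is of measure zero." Neither half of that is a proof: on the resolution $\widetilde{\Omega}_I$ the pullback volume and metric degenerate along the exceptional divisor $E_I$, so one is \emph{not} in the classical smooth strongly-pseudoconvex setting, and "measure zero" by itself does not yield compactness of a Toeplitz operator. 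The paper's proof sidesteps the Bergman projection altogether: for $f\in C_0^\infty(\overline{\Omega}_I)$ it observes that $\sigma_0(f)(D_N^*D_N+1)^{-1}$ maps $L^2$ into a weighted Sobolev space of sections vanishing on $\partial\Omega_I$, and then invokes a Rellich-type compact embedding that has to be proved in a weighted form adapted to the vanishing of $dV_I$ near the singular points (see the paper's footnote). This is the technical content your argument omits.

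Two smaller points. First, the dense $*$-subalgebra on which $[A,\sigma(f)]$ is bounded is not just "smooth functions on $\overline{\Omega}_I$": the paper takes $C^\infty(\overline{\Omega}_I)$ to be the continuous functions whose pullback via the resolution $\pi$ is smooth on $\overline{\widetilde{\Omega}}_I$, precisely so that the symbol $\sigma_{D_N}(x,df)$ can be controlled near $\Sigma_I$ (where the pullback metric degenerates). Your assertion that $\bar\partial f$ is "pointwise bounded with respect to $\sigma_I$" needs that choice of subalgebra plus the degeneration estimate. Second, you should close the argument by citing the abstract criterion for an unbounded relative Kasparov module (Prop.~1.1, 1.39, 3.1 of \cite{bdt:boundary}), which is where all three verified conditions are assembled into the conclusion.
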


\begin{proof}

Let $C^\infty(\overline{\Omega}_I)$ be the space of continuous functions on $\overline{\Omega}_I$ whose pullback to the resolution $\widetilde{\Omega}_I$ via the map $\pi$ is smooth on the closure $\overline{\widetilde{\Omega}}_I$. Since the singularities on $\Omega_I$ are isolated, by a partition of unity, we can easily show that $C^\infty(\overline{\Omega}_I)$ is a dense $*$-subalgebra of $C(\overline{\Omega}_I)$. One quickly checks that for any $u\in H^1_{loc}(\Omega_I^0, \wedge^{0, even} T^*\Omega^0_I )$
\[
D_N\sigma_0(f)u-\sigma_1(f)D_Nu=\sigma_{D_N}(x, df)u,\ \forall f\in C^\infty(\overline{\Omega}_I). 
\]
As the metric on $\Omega_I^0$ vanishes toward singular points, $\sigma_{D_N}(x, df)$ extends to a bounded linear operator from $H_0$ to $H_1$. From \cite[Prop. 1.3]{bdt:boundary}, we can conclude that for every $f\in C^\infty(\overline{\Omega}_I)$, $\sigma_1(f)$ preserves the domain of $D_N^*: H_1(\Omega^0_I)\to H_0(\Omega^0_I)$, and $[\sigma(f), D_N]$ extends to a bounded operator on $H_0(\Omega_I^0)\oplus H_1(\Omega_I^0)$.  And, therefore $[\sigma(f), D_N]$ is bounded for all $f\in C(\overline{\Omega}_I)$. 

As the resolution $\widetilde{\Omega}_I$ is a complex manifold with strongly pseudoconvex boundary,  Kohn \cite{ko:compact} showed that the $\bar{\partial}$-laplacian $\Box^N_{0,q}$ has a finite dimensional kernel and  a compact solution operator on $L^{0,q}(\widetilde{\Omega}_I)$ for all $q\geq 1$. By \cite[Thm. 1.2]{or:compact-resolvent}, the $\bar{\partial}$-laplacian $\Box^N_{0,q}$ also has a finite dimensional kernel and a compact solution operator on $L^{0,q}(\Omega^0_I)$, for $q\geq 1$.  From this, we can derive that $\Box^N_{0, q}$ on $L^{0,q}(\Omega^0_I)$ has a compact resolvent for $q\geq 1$. Therefore,  the operator 
\[
D_ND_N^*: H_1(\Omega^0_I)\to H_0(\Omega^0_I)
\]
has a compact resolvent. 

For $f\in C_0^\infty(\overline{\Omega}_I)$, the operator $\sigma_0(f)(D_N^*D_N+1)^{-1}$ maps $L^2(\Omega^0_I, \wedge^{0, even}T^*\Omega^0_I)$ to the Sobolev space $H^{2,2}_0(\Omega_I^0, \wedge^0T^*\Omega^0_I)$, where $H^{-, 2}_0$ is the $L^2$-Sobolev space of sections that vanishes on $\partial \Omega_I$. Hence the operator 
$\sigma_0(f)(D_N^*D_N+1)^{-1}$
is compact on $L^2(\Omega_I^0, \wedge^{0, even}T^*\Omega^0_I)$ by the Rellich\footnote{Recall that the volume element on $\Omega^0_I$ vanishes at the singular points of a certain order, i.e. Eq. (\ref{eq:volume-estimate}). Using this fact, one can prove a Rellich compact embedding theorem for $H^{2,2}_0(-)\hookrightarrow L^2(-)$ in the same way as  \cite{ev:book}.} compact embedding theorem. Analogously, the operator $\sigma_1(f)(D_ND_N^*+1)^{-1}$ is compact on $L^2(\Omega_I^0, \wedge^{0, odd}T^*\Omega^0_I)$. 

By \cite[Prop. 1.1, 1.39, Prop. 3.1]{bdt:boundary}, we conclude $(H, \sigma, A)$ is an unbounded Kasparov module.  
\end{proof}

Besides the $\bar{\partial}$-Neumann boundary condition, we can consider other boundary conditions. For example, denote $D_{max}$ and $D_{min}$ to be the maximal and minimal extension of the first order differential operator $D=\bar{\partial}+\bar{\partial}^*$ on $C^\infty_c(\Omega^0_I, \wedge^{0,even} T^*\Omega^0_I)$. More explicitly, the domain $\mathcal{D}(D_{max})$ is 
\[
\{\xi\in L^{2}(\Omega^0_I, \wedge^{0, even}T^* \Omega^0_I)\ :\ D\xi \in L^2(\Omega^0_I, \wedge^{0, odd}T^*\Omega^0_I)\}. 
\]
Let $D^t: C_c^\infty(\Omega^0_I)\to C_c^\infty(\Omega^0_I)$ be the formal adjoint of $D$. We have $D^t_{max}=(D_{min})^*$, and 
\[
D_{min}\subset D_N\subset D_{max}. 
\] 

\begin{proposition}\label{prop:unique-ext}The operator $D_{max}$ (and $D_{min}$) on $(H, \sigma)$ defines a $K$-cycle $[D_{max}]$ (and $[D_{min}]$) for $K_0(\overline{\Omega}_I, \partial\Omega_I )$. Furthermore,  in $K_0(\overline{\Omega}_I, \partial \Omega_I)$
\[
[D_{max}]=[D_{min}]=[D_N]. 
\] 
\end{proposition}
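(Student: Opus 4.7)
The plan is to replicate, for $D_{max}$ and $D_{min}$, the three axioms verified for $D_N$ in Proposition 3.1, and then to exploit the chain $D_{min} \subset D_N \subset D_{max}$ of closed extensions of the same differential expression in order to identify their classes in $K_0(\overline{\Omega}_I, \partial \Omega_I)$.

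For the Kasparov module axioms, let $D'$ denote either $D_{max}$ or $D_{min}$. The boundedness of $[\sigma(f), D']$ for $f \in C^\infty(\overline{\Omega}_I)$ follows from exactly the same symbol computation used for $D_N$: on any $u \in C^\infty_c(\Omega_I^0)$ one has $D'(fu) - f D'(u) = \sigma_D(x, df)\, u$, and the right-hand side extends to a bounded operator on $H$ because $df$ is smooth on $\overline{\Omega}_I$ and the metric degenerates only at the isolated singular points. For the relative compactness axiom with respect to the ideal $C_0(\overline{\Omega}_I)$, it suffices by density to check it on $f \in C^\infty_c(\overline{\Omega}_I \setminus \partial\Omega_I)$. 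Choose a cutoff $\chi \in C^\infty_c(\overline{\Omega}_I \setminus \partial \Omega_I)$ equal to $1$ on $\mathrm{supp}(f)$. On a neighborhood of $\mathrm{supp}(\chi)$ the three extensions $D_{max}$, $D_N$, $D_{min}$ coincide as differential operators with identical local domains by interior elliptic regularity, so the compact resolvent result of \cite{or:compact-resolvent} applied as in Proposition 3.1 yields compactness of $\sigma_0(f)(1+(D')^*D')^{-1}$.

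To prove $[D_{max}] = [D_N] = [D_{min}]$, I would form the bounded transforms $F' := D'(1 + (D')^*D')^{-1/2}$ and verify that $\sigma(f)(F_{max} - F_N)$ and $\sigma(f)(F_N - F_{min})$ are compact for every $f \in C_0(\overline{\Omega}_I)$; this is the standard Baaj--Julg criterion for equality of Kasparov classes. Via the integral representation
\[
F' \;=\; \tfrac{2}{\pi} \int_0^\infty D' \bigl(1+\lambda^2+(D')^*D'\bigr)^{-1}\, d\lambda
\]
and the second resolvent identity, the difference $F_{max} - F_N$ is controlled by a weighted integral of $(1+\lambda^2 + D_{max}^*D_{max})^{-1} - (1+\lambda^2 + D_N^*D_N)^{-1}$; since the two extensions differ only in their boundary behavior, this difference is effectively supported near $\partial \Omega_I$. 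Multiplying on the left and right by $\sigma(f)$ with $f|_{\partial\Omega_I}=0$ then yields a compact operator by the localization argument of the preceding paragraph, and the same reasoning handles the pair $(D_N, D_{min})$.

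The main obstacle is making the boundary-localization step rigorous in the presence of the isolated interior singularities $\Sigma_I$: one must confirm that the relevant resolvent differences genuinely concentrate near $\partial \Omega_I$, well away from $\Sigma_I$, and that the compact resolvent machinery of \cite{or:compact-resolvent} transfers uniformly from $D_N$ to $D_{max}$ and $D_{min}$. The resolution $\widetilde{\Omega}_I$ together with the Sobolev-type estimates assembled in Section \ref{sec:resolution} furnish the essential tools for reducing questions near $\Sigma_I$ to estimates on the smooth model $\widetilde{\Omega}_I$, where the analysis of Kohn and Ohsawa-style arguments applies directly.
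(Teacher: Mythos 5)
Your high-level plan --- verify the Kasparov module axioms for $D_{max}$ and $D_{min}$ and then compare classes via a resolvent/bounded-transform argument --- is consistent with what the paper has in mind (it simply defers to \cite[Prop. 2.1, 3.1, 3.3]{bdt:boundary} and the compact resolvent input from \cite{or:compact-resolvent}). However, there is a genuine gap in the step where you localize away from the boundary.

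You assert that for a cutoff $\chi$ supported in $\overline{\Omega}_I \setminus \partial\Omega_I$, ``on a neighborhood of $\mathrm{supp}(\chi)$ the three extensions $D_{max}$, $D_N$, $D_{min}$ coincide as differential operators with identical local domains by interior elliptic regularity.'' This is exactly where the singular case differs from the smooth case of \cite{bdt:boundary}. The set $\mathrm{supp}(\chi)$ may contain points of $\Sigma_I$, and near an isolated singularity the manifold $\Omega_I^0$ is smooth but \emph{incomplete}: interior elliptic regularity on $\Omega_I^0$ does not control the behavior of $L^2$ sections as they approach $\Sigma_I$. In fact the minimal and maximal closed extensions of $\bar\partial + \bar\partial^*$ genuinely can differ near an isolated singular point (this is the entire subject of \cite{or:compact-resolvent} and of the resolution estimates in Section \ref{sec:resolution}); the domain of $D_{min}$ forces decay at $\Sigma_I$ that is not required of $D_{max}$. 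So ``identical local domains'' is false as stated, and both the verification of the compactness axiom for $D_{max}$, $D_{min}$ and the claimed boundary-concentration of the resolvent differences are not established by your argument.

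You do flag this difficulty in your final paragraph, but flagging it is not the same as closing it, and the fix is not a refinement of the elliptic-regularity claim. What is actually needed is the content of \cite[Thms. 1.1, 1.2]{or:compact-resolvent}: the $\bar\partial$-Neumann Laplacian associated to \emph{both} the weak (maximal) and strong (minimal) extensions on a complex space with only isolated singularities and strongly pseudoconvex boundary has compact resolvent in degrees $q\geq 1$, and the cohomologies differ by finite-dimensional spaces. That statement, transported from the resolution $\widetilde{\Omega}_I$ to $\Omega_I^0$ by the estimates (\ref{eq:volume-estimate})--(\ref{eq:embedding}), gives the compactness axiom for $D_{max}$ and $D_{min}$, and also gives that the resolvent differences $(1+\lambda^2+D_{max}^*D_{max})^{-1} - (1+\lambda^2+D_{min}^*D_{min})^{-1}$ are compact \emph{globally}, not merely supported near $\partial\Omega_I$. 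Once that input is in hand, your Baaj--Julg style comparison of the bounded transforms goes through; but as written the proposal invokes a false local identification of domains and leaves the only genuinely new analytic point (compared to \cite{bdt:boundary}) unresolved.
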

The proof of Proposition \ref{prop:unique-ext} is a straightforward extension of \cite[Prop. 2.1, 3.1, 3.3]{bdt:boundary}. We skip the detail to avoid repetition. 

\subsection{The boundary map in K-homology}
In \cite{bd:theory}, Baum and the first author developed a long exact sequence for relative $K$-homology. In particular, applying the long exact sequence to our study, we obtain a boundary map $\partial: K_0(\overline{\Omega}_I, \partial \Omega_I)\longrightarrow K_1(\partial \Omega_I)$.  In this subsection, we study the boundary $\partial [D_N]\in K_1(\partial \Omega_I)$.  

Let $\ker(D_N)$ be the space 
\[
\{\xi\in L^2(\Omega^0_I, \wedge^{0,even}T^*\Omega^0_I)\ :\ D_N (\xi)=0\}.
\]
By the property that $D_N$ has a finite dimensional solution space on $L^2(\Omega^0_I, \wedge^{0,q}T^*\Omega^0_I)$ for $q\geq 1$, we know that up to a finite dimensional subspace $\ker(D_N)$ is equal to $L^2_a(\Omega_I)$, the space of $L^2$-holomorphic functions on $\Omega^0_I$. The K-homology class in $K_1(\partial \Omega_I)$ associated to $\ker(D_N)$ is equal to the K-homology class associated to $L^2_a(\Omega_I)$, i.e. 
\[
[\ker(D_N)]=[L^2_a(\Omega_I)]. 
\]

As $\partial \Omega_I$ is a strongly pseudoconvex, the restriction of the complex structure on $\Omega_I$ to the boundary defines a $CR$-structure on $\partial \Omega_I$, and therefore a spin$^c$ structure on $\partial \Omega_I$. Let $D_{\partial \Omega_I}$ be the Dirac operator associated to this $CR$-structure. Then we can conclude from \cite[Prop. 4.5, 4.6]{bdt:boundary} that in $K_1(\partial \Omega_I)$
\begin{equation}\label{eq:k-homology}
\partial [D_N]=[\ker D_N]=[L^2_a(\Omega_I)]=[D_{\partial \Omega_I}]. 
\end{equation}

\subsection{Proof Theorem \ref{thm:index} and Corollary \ref{cor:index}}

Theorem \ref{thm:index} is a corollary of Proposition \ref{prop:kk-class} and Eq. (\ref{eq:k-homology}). We explain the proof of Corollary \ref{cor:index}.  By Proposition \ref{prop:kk-class}, \ref{prop:unique-ext}, and Eq. (\ref{eq:k-homology}), we conclude that the $L^2_{a}(\Omega_I)$ defines a $K$-homology class on $\partial \Omega_I$. This implies that $L^2_a(\Omega_I)$ is an essentially normal $A$-module, and confirms Part (i) of Theorem \ref{thm:index}. Also we conclude from Eq. (\ref{eq:k-homology}) that $[L^2_a(\Omega_I)]$ is equal to the fundamental class of $\partial \Omega_I$ associated to the canonical $CR$-structure and therefore the contact structure on $\partial \Omega_I$, and confirms Part (ii) of Theorem \ref{thm:index}. 

\begin{remark}\label{rmk:assump-1.6}We point out that the proofs of Theorem \ref{thm:index} only use the property that $\Omega_I$ is a complex analytic space of pure dimension $n$  with the following properties.
\begin{enumerate}
\item $\Omega_I$ has a strongly pseudoconvex boundary $\partial \Omega_I:=\overline{\Omega}_I\backslash \Omega_I$.
\item $\Omega_I$ may contain isolated singularities away from $\partial \Omega_I$. 
\end{enumerate}
From these assumptions, we can conclude from \cite[Theorem 1.2]{or:compact-resolvent} that that operator $D_ND_N^*$ has compact resolvents and therefore Proposition \ref{prop:kk-class}, \ref{prop:unique-ext}, and Theorem \ref{thm:index}. 
\end{remark}

When $I$ is a homogeneous ideal of $A=\complex[z_1, \cdots, z_m]$, the zero variety $Z_I$ is invariant under the multiplication by $\complex^*=\complex-\{0\}$. Assume that the origin is the only possible isolated singular point of $Z_I$. $Z_I$ intersects with the unit sphere $\mathbb{S}^{2m-1}$ transversely, and therefore the boundary $\partial \Omega_I$ is a smooth submanifold of $\mathbb{S}^{2m-1}$. The group $S^1$ as the unit circle in $\complex^*$ acts on the unit sphere $\mathbb{S}^{2m-1}$ freely. As $I$ is a homogeneous ideal,  $\partial \Omega_I$ is invariant under the $S^1$-action. We easily observe that the $K$-homology class $[D_N]$ (and $\partial [D_N]$) is $S^1$-equivariant and lives in $K^{S^1}_0(\overline{\Omega}_I, \partial \Omega_I)$ (and in $K^{S^1}_1(\partial \Omega_I)$). Furthermore, the quotient space $X_I:=\partial \Omega_I/S^1$ is an embedded smooth submanifold of $\complex \mathbb{P}^{m-1}$, which is the projective variety associated to the ideal $I$. Let $D_{X_I}\in K_0(X_I)$ be the fundamental class on $X_I$ associated to the $\bar{\partial}$-operator.  We explain below the relation between $\partial [D_N]\in K^{S_1}_1(\partial \Omega_I)$ and $[D_{X_I}]\in K_1(X_I)$.  

\begin{proposition}\label{prop:even-class}When the ideal $I$ is homogeneous and the origin is the only possible singular point of $Z_I$, there is a natural isomorphism $\alpha_{I}$ from $K_1^{S^1}(\partial \Omega_I)$ to $K_0(X_I)$ such that $\alpha_I\big(\partial [D_N]\big)=[D_{X_I}]$ in $K_0(X_I)$. 
\end{proposition}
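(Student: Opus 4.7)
The plan is as follows. Under the homogeneity hypothesis together with Assumption \ref{ass:principal}, the variety $Z_I\setminus\{0\}$ is smooth and carries a free $\mathbb C^{\ast}$-action, so $\partial\Omega_I=Z_I\cap\mathbb S^{2m-1}$ is a smooth principal $S^1$-bundle $\pi\colon\partial\Omega_I\to X_I$ onto the closed complex submanifold $X_I\subset\mathbb{CP}^{m-1}$, and the CR-structure on $\partial\Omega_I$ is $S^1$-invariant because $I$ is homogeneous. The strategy is to construct $\alpha_I$ directly from this principal bundle structure and then to match the two distinguished classes $\partial[D_N]$ and $[D_{X_I}]$ by comparing the associated $\mathrm{spin}^c$ structures.

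The construction of $\alpha_I$ uses $S^1$-equivariant Baum--Douglas geometric cycles. A class in $K_1^{S^1}(\partial\Omega_I)$ is represented by a triple $(M,E,f)$, where $M$ is an $S^1$-equivariant $\mathrm{spin}^c$ manifold of odd dimension, $E\to M$ is an equivariant Hermitian bundle, and $f\colon M\to\partial\Omega_I$ is an equivariant continuous map. Because $f$ is equivariant and the action on $\partial\Omega_I$ is free, the $S^1$-action on $M$ is automatically free as well, so $M/S^1$ is a smooth manifold of even dimension, and its tangent bundle $T(M/S^1)=TM/\mathbb R\langle\xi\rangle$ (with $\xi$ the orbit vector field) inherits a $\mathrm{spin}^c$ structure from that of $M$. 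I would define
\[
\alpha_I\bigl([M,E,f]\bigr)=\bigl[\,M/S^1,\;E/S^1,\;\bar f\colon M/S^1\to X_I\,\bigr]\in K_0(X_I).
\]
A two-sided inverse sends a cycle $(N,F,g\colon N\to X_I)$ to the pullback $S^1$-bundle $\tilde N:=g^{\ast}\partial\Omega_I\to N$ together with the pulled-back bundle and the canonical equivariant map $\tilde g\colon\tilde N\to\partial\Omega_I$; both constructions respect the Baum--Douglas equivalence relations, giving the desired isomorphism. The same isomorphism can be recovered algebraically via the Morita equivalence $C(\partial\Omega_I)\rtimes S^1\sim_M C(X_I)$ and Green--Julg, the degree shift by $1$ reflecting $\dim S^1=1$.

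For the identification $\alpha_I(\partial[D_N])=[D_{X_I}]$, I would invoke Theorem \ref{thm:index} and Eq.~\eqref{eq:k-homology} to rewrite $\partial[D_N]=[D_{\partial\Omega_I}]$, the $\mathrm{spin}^c$ Dirac class of the CR-structure on $\partial\Omega_I$; in the geometric picture this is the tautological cycle $(\partial\Omega_I,\mathbb C,\mathrm{id})$, which $\alpha_I$ sends to $(X_I,\mathbb C,\mathrm{id})$ equipped with the $\mathrm{spin}^c$ structure induced by quotienting the CR-$\mathrm{spin}^c$ structure on $\partial\Omega_I$. It remains to match this quotient $\mathrm{spin}^c$ structure with the one coming from the complex structure on $X_I$. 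Because $I$ is homogeneous, the Euler field $\mathcal E=\sum z_i\partial_{z_i}$ is tangent to $Z_I$, and the real generator of the $S^1$-action on $\partial\Omega_I$ is the imaginary part of $\mathcal E$ restricted to $\mathbb S^{2m-1}$. Consequently, the horizontal distribution $H:=(\ker d\pi)^\perp\subset T\partial\Omega_I$ sits inside the CR-subbundle, and as a complex vector bundle $H$ is canonically isomorphic to $\pi^{\ast}TX_I$. This complex-bundle identification matches the two $\mathrm{spin}^c$ structures on $X_I$, so the resulting Dirac class is $[D_{X_I}]$.

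The principal obstacle is verifying that the geometric quotient respects all of the Baum--Douglas equivalences (direct sum/disjoint union, bordism, and vector bundle modification) while preserving the equivariant $\mathrm{spin}^c$ structures and bundle data; although routine in spirit, this requires care. The more delicate step is the $\mathrm{spin}^c$-structure comparison in the previous paragraph: one must exhibit the orthogonal splitting $T\partial\Omega_I=\mathbb R\langle X_\theta\rangle\oplus H$, with $X_\theta$ the imaginary Euler field, and verify that the almost-complex structure induced on $H$ by the CR-structure matches $\pi^{\ast}J_{X_I}$. Once these compatibilities are in place, Proposition \ref{prop:even-class} follows.
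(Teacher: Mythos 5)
Your proposal is correct but follows a genuinely different route from the paper. The paper defines $\alpha_I = PD_{X_I}\circ\beta_I\circ\bigl(PD^{S^1}_{\partial\Omega_I}\bigr)^{-1}$ as a composition of $S^1$-equivariant Poincar\'e duality on $\partial\Omega_I$, the free-quotient isomorphism $K^0_{S^1}(\partial\Omega_I)\cong K^0(X_I)$, and Poincar\'e duality on $X_I$; it then tracks $\partial[D_N]=[D_{\partial\Omega_I}]$ through the three maps, using only the formal fact that Poincar\'e duality sends the spin$^c$ Dirac class to the trivial line bundle (and back). You instead work at the level of geometric Baum--Douglas cycles, defining $\alpha_I$ by passing to $S^1$-quotients of equivariant cycles, and you identify $\alpha_I(\partial[D_N])$ by explicitly matching the CR-induced spin$^c$ structure on $\partial\Omega_I$ with the complex spin$^c$ structure on $X_I$ through the horizontal-distribution isomorphism $H\cong\pi^{\ast}TX_I$. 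Both constructions produce the same natural wrong-way map (as you note when you mention the Morita/Green--Julg reformulation), so either proves the proposition. The paper's duality-based argument is shorter and essentially formal; your geometric-cycle argument is more transparent about \emph{why} the two fundamental classes correspond, at the cost of having to verify that the $S^1$-quotient operation descends through all the Baum--Douglas equivalence relations (disjoint union, bordism, vector bundle modification) and that the two spin$^c$ structures on $X_I$ genuinely coincide --- you correctly flag these as the delicate points, and they are real but routine. One small imprecision worth fixing: the horizontal distribution $(\ker d\pi)^{\perp}$ does not merely sit inside the CR-subbundle --- it \emph{equals} it, since both are the orthogonal complement of the Reeb field $\mathrm{Im}(\mathcal{E})|_{\partial\Omega_I}$ and therefore have the same rank.
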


\begin{proof}
We observe that as $I$ is homogeneous, the $CR$-structure on $\partial \Omega_I$ is $S^1$-equivariant, and gives an $S^1$-equivariant spin$^c$ structure on $\partial \Omega_I$. As $\dim(\partial \Omega_I)$ is odd, the $S^1$-equivariant Poincar\'e duality gives an isomorphism 
\[
\big(PD^{S^1}_{\partial \Omega_I}\big)^{-1}: K_1^{S^1}(\partial \Omega_I)\stackrel{\cong}{\longrightarrow} K^0_{S^1}(\partial \Omega_I). 
\]
As the $S^1$-action on $\partial \Omega_I$ is free, $K^0_{S^1}(\partial \Omega_I)$ is naturally isomorphic to $K^0(X_I)$. Let $\beta_I$ denote the isomorphism from $K^0_{S^1}(\partial \Omega_I)$ to $K^0(X_I)$. $X_I$ is a complex manifold with a canonical spin$^c$ structure. The Poincar\'e duality gives an isomorphism 
\[
PD_{X_I}: K^0(X_I)\longrightarrow K_0(X_I). 
\]
Define $\alpha_I: K_1^{S^1}(\partial \Omega_I)\to K_0(X_I)$ to be the composition $PD_{X_I}\circ \beta_I \circ \big(PD^{S^1}_{\partial \Omega_I}\big)^{-1}$.  $\alpha_I$ is obviously an isomorphism as each involved component is.  

Theorem \ref{thm:index} identifies the class $\partial [D_N]$ with the fundamental class $D_{\partial \Omega_I}$ associated to the $CR$-structure on $\partial \Omega_I$. Furthermore, it is not hard to trace through the arguments that this is an identification in $K^{S^1}_1(\partial \Omega_I)$. The inverse Poincar\'e duality map $\big(PD^{S^1}_{\partial \Omega_I}\big)^{-1}$ maps $D_{\partial \Omega_I}$ to the trivial line bundle $\complex_{\partial \Omega_I}$ in $K^0_{S^1}(\partial \Omega_I)$. The map $\beta_I$ maps $\complex_{\partial \Omega_I}$ to the trivial line bundle $\complex_{X_I}$ in $K^0(X_I)$. And the Poincar\'e duality map $PD_{X_I}$ on $X_I$ maps $\complex_{X_I}$ to the fundamental class $D_{X_I}$ in $K_0(X_I)$. Therefore, we conclude that $\alpha_I$ maps $\partial [D_N]$ to $[D_{X_I}]$ in $K_0(X_I)$. 
\end{proof}

\section{Geometrization of the quotient Hilbert module}\label{sec:restriction}
In this section, we construct a right inverse $E$ of the restriction operator $R: L^2_{a}(\ball^m)\to L^2_{a,M}(\Omega_I)$, and apply it to prove that both the closure $\overline{I}$ of $I$ in $L^2_a(\ball^m)$  and the quotient $Q_I=L^2_a(\ball^m)/\overline{I}$ are essentially normal $A$-modules. We prove a Toeplitz index theorem for $Q_I$ by identifying the $K$-homology class associated to $Q_I$ with the fundamental class on $\partial \Omega_I:=\overline{\Omega}_I\backslash \Omega_I$. 
\subsection{Integral representation formula}
In \cite[Theorem I1]{ha:integral}, an integral representation formula was obtained for an analytic space satisfying Assumption \ref{ass:principal}. More precisely,  let $\alpha$ be the volume form on $\partial \Omega_I$. There is a kernel function $K(z, \zeta)$ on $\Omega_I \times \partial \Omega_I$ such that for each $\zeta\in \partial \Omega_I$, the function $K(-, \zeta)$ is holomorphic on $\overline{\Omega}_I$.  Let $f$ be a holomorphic function on $\overline{\Omega}_I$. Then $f$ can be represented by the following integral 
\begin{equation}\label{eq:integration}
f(z)=\int_{\partial \Omega_I} f(\zeta) \frac{K(z, \zeta)}{(1-\sum_{i=1}^m\bar{\zeta}_iz_i)^k}\alpha(\zeta) . 
\end{equation}

By choosing a cut-off function $\chi$ supported in a neighborhood of the boundary $\partial \Omega_I$, we can conclude the following corollary from Equation (\ref{eq:integration}). 
\begin{lemma}\label{lem:integration}
Under Assumption \ref{ass:principal}, there is neighborhood $M_I$ of $\Omega_I$ in $Z_I$, and a smooth differential form $\eta_j(z, \zeta)$ ($j=0,1,\cdots$) on  $M_I\times M_I $ of bidegree $(k,k)$ (when $j=0$, $(k,k-1)$) in $\zeta$ supported away from $\Sigma_I$, the set of singular points, and $(0,0)$ in $z$ such that $\eta_j(-, \zeta)$ is holomorphic on $M_I$ for any $\zeta\in M_I$. For any  $f\in L^1_{a,s-1}(\Omega_I)$, the following integral representations hold,
\[
f(z)=\int_{\partial \Omega_I} \frac{f(\zeta) \eta_0(z, \zeta)}{ (1-\sum_{i=1}^m z_i\bar{\zeta}_i)^{k}},\ s=0,
\]
and
\[
f(z)=\int_{\Omega_I} \frac{f(\zeta) \eta_s(z, \zeta)(-\rho)^{s-1}(\zeta)}{ (1-\sum_{i=1}^m z_i\bar{\zeta}_i)^{k+s}},\ s\geq 1.
\]
 \end{lemma}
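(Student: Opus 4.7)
The plan is to derive both formulas from the boundary representation (\ref{eq:integration}) in three passes: a cut-off to keep all supports away from $\Sigma_I$, a density argument to extend from $\overline{\Omega}_I$-holomorphic functions to $L^1_{a,s-1}(\Omega_I)$, and, for $s\geq 1$, an iterated Stokes' theorem to convert the boundary integral into a weighted volume integral.

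By Assumption \ref{ass:principal} the boundary $\partial\Omega_I$ is smooth and stays a positive distance from the isolated singular set $\Sigma_I$, so I would choose $\chi\in C^\infty(M_I)$ equal to $1$ on a neighborhood of $\partial\Omega_I$ and vanishing on a neighborhood of $\Sigma_I$. Since $\chi|_{\partial\Omega_I}\equiv 1$, replacing the kernel in (\ref{eq:integration}) by $\chi(\zeta)K(z,\zeta)\alpha(\zeta)$ leaves the boundary integral unchanged, and interpreting the result as a $(k,k-1)$-form $\eta_0(z,\zeta)$ in $\zeta$, holomorphic in $z$ and supported away from $\Sigma_I$, gives the $s=0$ formula for $f$ holomorphic on a neighborhood of $\overline{\Omega}_I$. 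To pass to general $f\in L^1_{a,-1}(\Omega_I)$, I would approximate by the dilations $f_r(z):=f(rz)$ with $r\to 1^-$; the Hartogs principle, available under Assumption \ref{ass:principal} when $k\geq 2$, implies that each $f_r$ is holomorphic on a neighborhood of $\overline{\Omega}_I$, and $f_r\to f$ in $L^1_{a,-1}$. Dominated convergence, using the uniform continuity of the kernel on compact subsets of $M_I\times M_I$ disjoint from the singular locus, concludes the $s=0$ case.

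For $s\geq 1$ the key observation is that $(-\rho)^s$ vanishes on $\partial\Omega_I$, so Stokes' theorem applied on $\Omega_I\cap\{-\rho\geq\epsilon\}$ to a primitive involving $(-\rho)^s$ produces no surface contribution as $\epsilon\to 0$. Writing $d((-\rho)^s\,\tau)=-s(-\rho)^{s-1}d\rho\wedge\tau+(-\rho)^s\, d\tau$ together with the identity
\[
\bar\partial_\zeta\frac{1}{(1-\langle z,\zeta\rangle)^{k+j}}=\frac{(k+j)\sum_i z_i\,d\bar\zeta_i}{(1-\langle z,\zeta\rangle)^{k+j+1}},
\]
each iteration of Stokes trades a factor of $(-\rho)$ for one further power of $(1-\langle z,\zeta\rangle)^{-1}$ in the denominator. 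Starting from the $s=0$ kernel and iterating $s$ times, one arrives at a volume integrand with weight $(-\rho)^{s-1}$ and denominator $(1-\langle z,\zeta\rangle)^{k+s}$. All smooth factors—the Hefer-type pieces of $K$, the derivatives of $\chi$, and differentials of $\rho$—collect into a single $(k,k)$-form $\eta_s(z,\zeta)$ in $\zeta$, holomorphic in $z$ and supported away from $\Sigma_I$. The density argument of the previous paragraph then extends the identity to all $f\in L^1_{a,s-1}(\Omega_I)$.

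The main obstacle is justifying the integration by parts near $\Sigma_I$, where $\rho$, the hermitian structure, and the various primitives need not extend smoothly. The cut-off $\chi$ was introduced precisely for this purpose: by confining every integrand to the complement of a neighborhood of $\Sigma_I$, the Stokes calculation is performed on a smooth compact submanifold-with-boundary of $\Omega_I^0$, where the classical theorem applies without modification. Terms generated by $d\chi$ land in the transition region in the interior and simply contribute to the final form $\eta_s$ without altering the formal structure of the formula.
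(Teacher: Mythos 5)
Your overall plan (cut-off away from $\Sigma_I$, Stokes' theorem to pass from the boundary formula to the weighted volume formulas, a density argument to extend from nice functions) matches the paper's, which simply cites Beatrous's Corollary 2.4 line by line and records the two extra observations you also make: the cut-off can be taken supported off $\Sigma_I$, and the Hartogs principle (valid since $k\geq 2$ under Assumption \ref{ass:principal}) guarantees that $L^1_{a,s}$-functions extend holomorphically across the isolated singular points so that Hatziafratis's formula (\ref{eq:integration}) applies. Your Stokes computation and the bookkeeping of the weight $(-\rho)^{s-1}$ versus the exponent $k+s$ are correct.

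There is, however, a genuine gap in your density step. You propose to approximate a general $f\in L^1_{a,s-1}(\Omega_I)$ by the dilations $f_r(z):=f(rz)$, but this is not meaningful on $\Omega_I=Z_I\cap\ball^m$: for a non-homogeneous ideal $I$ the variety $Z_I$ is not invariant under $z\mapsto rz$, so $rz$ need not lie in $Z_I$ and $f_r$ is simply not defined as a function on $\Omega_I$. The related invocation of Hartogs (``implies that each $f_r$ is holomorphic on a neighborhood of $\overline{\Omega}_I$'') also overstates what Hartogs gives: it extends a holomorphic function on $\Omega^0_I$ across the isolated singular set to $\Omega_I$, but never produces holomorphy past $\partial\Omega_I$. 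The correct way to handle the density step, as in Beatrous, is to exhaust $\Omega_I$ by the slightly shrunk domains $\Omega_I\cap\{-\rho>\epsilon\}$ (on which any $f\in L^1_{a,s-1}$ is continuous up to the new boundary), apply the representation formula there, and pass to the limit $\epsilon\to 0$ using uniform estimates on the kernels and the defining function; alternatively one can note, a posteriori, that $\mathcal O(\overline\Omega_I)$ is dense in $L^1_{a,s-1}(\Omega_I)$ once Theorem \ref{thm:extension} is available. The rest of your argument — the choice of $\chi$ with support away from $\Sigma_I$, the resulting confinement of the Stokes computation to a smooth compact part of $\Omega^0_I$, and the absorption of $d\chi$-terms into $\eta_s$ — is sound.
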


\begin{proof} The proof is a line by line repetition of the proof of \cite[Corollary 2.4]{be:extension} from the representation (\ref{eq:integration}). It is worth pointing out that the cut-off function $\chi$ in the proof of \cite[Corollary 2.4]{be:extension} can be chosen to be supported away from the set $\Sigma_I$ of singular points in $\Omega_I$. Therefore, the support of $\eta_j(-, \zeta)$ is also away from $\Sigma_I$. We remark that the property that the dimension $k$ of $\Omega_I$ is at least 2 and Assumption \ref{ass:principal} assures that the Hartogs principle \cite[Ch.III, Ex.3.5]{ha:book} holds on $\Omega_I$.  The Hartogs principle assures that any $f\in L^1_{a,s}(\Omega_I)$ is holomorphic on $\Omega_I$.  Therefore, the singularities in $\Omega_I$ do not affect any parts of the proofs in \cite{be:extension}. We leave the detail to the reader. 
\end{proof}
\subsection{Extension operator} Let $C(\mathbb{B}^m)$ (and $C(\Omega_I)$) be the space of continuous functions on $\mathbb{B}^m$ (and $\Omega_I$). Consider the restriction operator $R:C(\mathbb{B}^m)\to C(\Omega_I)$ defined by 
\[
R(f)(z):=f(z),\ \text{for}\ z\in \Omega_I. 
\]
The restriction of $R$ to the subspace $\mathcal{O}(\mathbb{B}^m)$ of holomorphic functions on $\ball^m$ takes image in $\mathcal {O}(\Omega_I)$.  

The function $\eta_0(-, \zeta)$ can be viewed as a holomorphic function from $\Omega_I$ to smooth $(k,k-1)$ forms on $M_I$. By \cite[Corollary 12.1]{bungart:extension}, $\eta_0(-,\zeta)$ can be extended to a holomorphic function on $\ball^m$ with value in smooth $(k,k-1)$ forms on $M_I$ supported away from the set $\Sigma_I$ of singular points, which will be again denoted by $\eta_0(z, \zeta)$. Define a linear operator $E_0: L^{1}_{a, -1}(\Omega_I)\rightarrow \mathcal{O}(\ball^m)$ by 
\[
E_0(f)(z)=\int_{\partial \Omega_I}\frac{f^*(\zeta) \eta_0(z, \zeta)}{ (1-\sum_{i=1}^m z_i\bar{\zeta}_i)^{k}},
\]
where $f^*$ is the boundary value of $f$. 

Similarly we extend $\eta_j(z,\zeta)$ to a holomorphic function on $\ball^m$ with value in smooth $(k,k)$ forms on $M_I$ (a neighborhood of $\Omega_I$ in $Z_I$) supported away from $\Sigma_I$. Define a linear operator $E_j: L^1_{a,j}(\Omega_I)\rightarrow \mathcal{O}(\ball^m)$ by 
\[
E_jf(z)=\int_{\Omega_I} \frac{f(\zeta) \eta_j(z, \zeta)(-\rho)^{j-1}(\zeta)}{ (1-\sum_{i=1}^m z_i\bar{\zeta}_i)^{k+j}},\ j\geq 1. 
\]

A direction generalization of \cite[Theorem 2.5, Corollary 2.6]{be:extension} proves the following proposition. 
\begin{proposition}\label{prop:extension}
\begin{enumerate}
\item For every $f\in L^1_{a, -1}(\Omega_I)$, $E_0(f)\in \mathcal{O}(\overline{\ball}^m\backslash \partial \Omega_I)$ and $E_0(f)|_{\Omega_I}=f$.
\item For $j\geq 1$, every $f\in L^1_{a, j-1}(\Omega_I)$, $E_j(f)\in \mathcal{O}(\ball^m)$ and 
\[
E_jf|_{\Omega_I}=f,\qquad E_jf=E_kf,\ k<j. 
\]
\end{enumerate}
\end{proposition}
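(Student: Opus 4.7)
My plan is to mirror Beatrous \cite[Thm.~2.5, Cor.~2.6]{be:extension} essentially line by line. The three features that allow the singularities of $\Omega_I$ to be ignored are already in place: the forms $\eta_j(\cdot,\zeta)$ have been chosen to be supported away from $\Sigma_I$, Bungart's theorem \cite{bungart:extension} extends them holomorphically in $z$ to all of $\ball^m$, and the Hartogs phenomenon (valid because $k\geq 2$ under Assumption~\ref{ass:principal}) makes every $L^1$ holomorphic function on $\Omega_I^0$ extend holomorphically across $\Sigma_I$. Therefore the isolated singularities never interact with any of the integrals that appear below.

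\textbf{Holomorphy and boundary identity.} For $E_0(f)$, note that when $z\in\overline{\ball}^m\setminus\partial\Omega_I$ and $\zeta\in\partial\Omega_I\subset\partial\ball^m$, Cauchy--Schwarz forces $\langle z,\zeta\rangle\neq 1$, so $(1-\langle z,\zeta\rangle)^{-k}$ is smooth in $\zeta$ and locally uniformly holomorphic in $z$. Together with the existence of the boundary trace $f^*\in L^1(\partial\Omega_I,\alpha)$ for $f\in L^1_{a,-1}(\Omega_I)$ (via the standard weighted-trace estimate, pulled up to the resolution $\widetilde{\Omega}_I$ if needed) and the boundedness of $\eta_0$ on the support of the integrand, this permits differentiation under the integral sign and shows $E_0(f)\in\mathcal{O}(\overline{\ball}^m\setminus\partial\Omega_I)$. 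The identity $E_0(f)|_{\Omega_I}=f$ is precisely the $s=0$ case of Lemma~\ref{lem:integration}. For $E_j(f)$ with $j\geq 1$, the same Cauchy--Schwarz inequality instead yields $|\langle z,\zeta\rangle|<1$ whenever $z\in\ball^m$ and $\zeta\in\overline{\Omega}_I$, so $(1-\langle z,\zeta\rangle)^{-k-j}$ is holomorphic in $z\in\ball^m$ and bounded uniformly on $\{|z|\leq r\}\times\overline{\Omega}_I$ for each $r<1$. The hypothesis $f\in L^1_{a,j-1}(\Omega_I)$ then dominates the integrand and permits differentiation under the integral, giving $E_j(f)\in\mathcal{O}(\ball^m)$; the identity $E_jf|_{\Omega_I}=f$ is the $s=j$ case of Lemma~\ref{lem:integration}.

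\textbf{Coincidence of the extensions.} The subtlest point, and what I expect to be the main obstacle, is $E_jf=E_kf$ on all of $\ball^m$ for $k<j$. Since $\Omega_I$ is a proper analytic subvariety of $\ball^m$, the already-established agreement on $\Omega_I$ cannot be propagated off $\Omega_I$ by any identity-principle argument; one must compare the kernels themselves. Following Beatrous, the Henkin--Leiterer construction used in \cite{ha:integral} produces, for each pair $k<j$, a smooth form $\theta_{k,j}(z,\zeta)$ on $M_I\times M_I$ supported away from $\Sigma_I$ such that the difference of the two integrand kernels equals $\bar{\partial}_\zeta\theta_{k,j}(z,\zeta)$. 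Stokes' theorem applied on the resolution $\widetilde{\Omega}_I$, where $\pi^*\theta_{k,j}$ is smooth and vanishes near the exceptional divisor $E_I$, then converts $E_jf-E_kf$ into a boundary integral over $\partial\Omega_I$ which, by the same explicit formula, vanishes identically. Because every form involved is supported away from $\Sigma_I$, the Beatrous argument transfers verbatim; the real work is verifying the required $\bar{\partial}$-relation among the Henkin--Leiterer kernels in the present singular setting.
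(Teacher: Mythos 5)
Your proposal is correct and follows the same route as the paper, which itself simply delegates to Beatrous by remarking that the proposition is ``a direct generalization of \cite[Theorem 2.5, Corollary 2.6]{be:extension}.'' You supply exactly the ingredients that make the delegation legitimate in the singular setting --- the support of $\eta_j(\cdot,\zeta)$ away from $\Sigma_I$, Bungart's extension in $z$, the Hartogs phenomenon for $k\geq 2$, and the Stokes/Henkin--Leiterer kernel identity for $E_jf=E_kf$ --- all of which are already in place from Lemma~\ref{lem:integration} and its proof.
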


The following theorem is a strengthened version of Theorem \ref{thm:restriction}.
\begin{theorem}\label{thm:extension} 

Under Assumption \ref{ass:principal}, there is a continuous linear operator 
\[
E: L^p_{a, s+M}(\Omega_I)\longrightarrow L^p_{a,s}(\ball^m),\ 1<p<\infty, -1\leq s,
\]
such that $RE=Id$. Therefore, the restriction operator $R$ is a surjective bounded linear operator 
\[
R: L^p_{a,s}(\ball^m)\longrightarrow L^p_{a, s+M}(\Omega_I), \ 1<p<\infty, -1\leq s.
\]
\end{theorem}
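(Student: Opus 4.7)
The plan is to take $E$ from the family $\{E_j\}_{j\geq 0}$ constructed just before Proposition \ref{prop:extension}; the identity $RE=\mathrm{Id}$ is then automatic from $E_j f|_{\Omega_I}=f$, so the whole task is the $L^p$-boundedness $L^p_{a,s+M}(\Omega_I)\to L^p_{a,s}(\ball^m)$. For $s=-1$ I would use $E=E_0$, which extends a holomorphic function from its boundary trace. For $s\geq 0$ I would fix an integer $j$ large compared to $s$ and set $E=E_j$; by Proposition \ref{prop:extension}(2) the resulting value is independent of the choice of sufficiently large $j$, which gives the freedom to optimize the exponents in the Schur test below.

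To prove that $E_j$ is bounded I would strip off the smooth form $\eta_j(z,\zeta)$, which is uniformly bounded and supported away from $\Sigma_I$, and reduce to the positive integral operator
\[
Tf(z)=\int_{\Omega_I}\frac{(-\rho(\zeta))^{j-1}}{|1-\langle z,\zeta\rangle|^{k+j}}\,|f(\zeta)|\,dV_I(\zeta),\qquad z\in \ball^m.
\]
On $T$ I would run Schur's test with weight functions $h(\zeta)=(-\rho(\zeta))^{-\alpha}$ on $\Omega_I$ and $g(z)=(-\rho(z))^{-\beta}$ on $\ball^m$. The two resulting inequalities amount to weighted Forelli--Rudin estimates: integrals of $|1-\langle z,\zeta\rangle|^{-k-j}$ against powers of $(-\rho)$ over $\ball^m$, and the analogous integrals over $\Omega_I$. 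The bounds on $\ball^m$ are classical. The bounds on $\Omega_I$ are the new ingredient and I would derive them by pulling back to the resolution $\widetilde{\Omega}_I$ of Section \ref{sec:resolution}: the inequalities \eqref{eq:volume-estimate} and \eqref{eq:embedding} let one compare $\pi^*dV_I$ to a smooth volume form twisted by an effective divisor on the exceptional set, so near $\Sigma_I$ the estimate reduces to a smooth calculation on $\widetilde{\Omega}_I$. In a neighborhood of $\partial\Omega_I$, the transversality in Assumption \ref{ass:principal} allows one to compare $|1-\langle z,\zeta\rangle|$ with the usual boundary distance and puts us in the Beatrous setting for a strictly pseudoconvex domain of complex dimension $k=m-M$. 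Finally, for $\zeta$ in a small neighborhood of an isolated singular point and $z\in \ball^m$, the kernel $|1-\langle z,\zeta\rangle|^{-k-j}$ is uniformly bounded, so that contribution is controlled by the finite weighted volume of the neighborhood.

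The main obstacle I expect is running the Schur test uniformly across the two distinct regimes: the exponents $\alpha,\beta,j$ must be chosen so that the classical Bergman-space estimates near $\partial\Omega_I$ \emph{and} the resolution-twisted estimates near $\Sigma_I$ close simultaneously, and the divisor contribution from Section \ref{sec:resolution} has to be absorbed without shrinking the admissible range for $\alpha,\beta$. Once a compatible triple $(\alpha,\beta,j)$ is fixed, boundedness of $E$ follows, and surjectivity of $R$ is immediate from $RE=\mathrm{Id}$. The continuity of $R$ in the reverse direction can be obtained either from this identity together with the open mapping theorem applied to the factorization $R\circ E=\mathrm{Id}$, or more directly by a slicing argument in the $M$ complex directions transverse to $\Omega_I$ in $\ball^m$.
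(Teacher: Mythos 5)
Your plan has the right skeleton — use the extension operators $E_j$ from Lemma \ref{lem:integration}, reduce to kernel estimates of Forelli--Rudin type, and close via a Schur test, with the identity $RE=\mathrm{Id}$ coming for free from Proposition \ref{prop:extension}. This is indeed how the paper proceeds, by invoking Beatrous' kernel lemma \cite[Theorem 4.1]{be:extension} rather than re-running the Schur test from scratch. But you have created for yourself a difficulty that the paper's argument avoids. The ``main obstacle'' you identify — matching Schur exponents between a boundary regime and a singular regime, and absorbing the divisor $D$ on the resolution $\widetilde{\Omega}_I$ — does not arise, because the cut-off function $\chi$ used in the construction of Lemma \ref{lem:integration} forces $\eta_j(z,\zeta)$ to vanish identically for $\zeta$ outside a neighborhood of $\partial\Omega_I$, in particular for $\zeta$ near $\Sigma_I$. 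Consequently the kernel of $E_j$ is not merely ``bounded'' near the singular set; it is $0$ there. When you strip the kernel down to $(-\rho(\zeta))^{j-1}\lvert 1-\langle z,\zeta\rangle\rvert^{-k-j}$ over all of $\Omega_I$, you discard precisely the support information that removes the singular regime, and then you have to reinstate it by hand via the resolution — an unnecessary detour. The correct reduction is to note that the kernel of $E_j$ is pointwise dominated by $C\,\lvert 1-\langle z,\zeta\rangle\rvert^{-(k+j)}$ with $\zeta$ ranging only over a collar of $\partial\Omega_I$ where $\Omega_I$ is smooth and strictly pseudoconvex, and then Beatrous' estimates apply verbatim with no role for $\widetilde{\Omega}_I$ or \eqref{eq:volume-estimate}--\eqref{eq:embedding} at all.

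Two smaller points. First, deducing boundedness of $R$ from ``the open mapping theorem applied to the factorization $R\circ E=\mathrm{Id}$'' does not work: $RE=\mathrm{Id}$ and $E$ bounded do not by themselves control $R$ on the complement of the range of $E$. The boundedness of $R$ is a separate assertion — in Beatrous \cite{be:extension} it is proved alongside the extension estimate (and in the abstract one can use the closed graph theorem, since $L^p_a$-convergence implies local uniform convergence); your ``slicing argument'' alternative is the right instinct. Second, you do not actually need to take $j$ ``large compared to $s$'' and let the Schur exponents float: the paper fixes the obvious $j$ for each $s$ and lets Beatrous' lemma do the bookkeeping, precisely because the singular regime has been eliminated.
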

\begin{proof} The proof is a direct generalization of the proof \cite[Thm. 1.1]{be:extension}.  The key observation is that with the choice of the cut-off function $\chi$ in the construction of Lemma \ref{lem:integration}, $\eta_j(-, \zeta)$ ($j\geq 0$) is zero outside a neighborhood of the boundary $\partial \Omega_I$.  Therefore, the support of $\eta_j(z, \zeta)$ is away from singular $\zeta$ values in $\Omega_I$. Hence, the kernel function $\frac{\eta_j(z, \zeta)\rho^{j-1}(\zeta)}{ (1-\sum_{i=1}^m z_i\bar{\zeta}_i)^{k+j}}$ is bounded on the whole $\Omega_I$ by 
\[
\frac{CdV_I}{(1-\sum_{i=1}^m z_i\bar{\eta}_i)^{k+j}} 
\]
for some constant $C>0$. This property allows us to use \cite[Theorem 4.1]{be:extension} to conclude the desired statements on the bounds of the operator $R$. 
\end{proof}
\subsection{Equivalence of Hilbert modules}

We look at the restriction operator 
\[
R: L^2_{a}(\ball^m)\longrightarrow L^2_{a, M}(\Omega_I).
\] 
As $R$ maps all functions in $\overline{I}$ to the zero function on $\Omega_I$, $R$ restricts to a  bounded linear map
\[
R_{\Omega_I}: Q_I=L^2_a(\ball^m)/\bar{I}\longrightarrow L^2_{a, M}(\Omega_I). 
\]
The extension map $E$ with $RE=I$ implies that $R_{\Omega_I}$ is surjective. 

To study the above structure, we prove the following general fact. 

\begin{proposition}\label{prop:equivalence}
Let $H_1$ and $H_2$ be two Hilbert $A$-modules. Let $X:H_1\to H_2$ be an isomorphism of $A$-modules. Any two of the following three statements imply the third one. 
\begin{enumerate}
\item The $A$-module $H_1$ is essentially normal.
\item The $A$-module $H_2$ is essentially normal.
\item The operator $X^*X: H_1\rightarrow H_1$ commutes with the $A$-module structure up to compact operators.  
\end{enumerate}
Therefore, when $H_1$ and $H_2$ are isomorphic essentially normal $A$-modules, the corresponding extensions associated to $H_1$ and $H_2$ are unitarily equivalent. 
\end{proposition}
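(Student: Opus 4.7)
The plan is to establish the three-way equivalence of (1), (2), (3), and then read off the statement about extensions. Write $T_i = \alpha_1(z_i)$ and $S_i = \alpha_2(z_i)$, so that the intertwining property of $X$ reads $XT_i = S_iX$ for every $i$. A direct substitution yields the identity $[XX^*, S_i] = X[X^*X, T_i]X^{-1}$, which shows that condition (3) is in fact symmetric between the two modules, and I will use this freely.

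For $(1)+(3)\Rightarrow(2)$ and $(2)+(3)\Rightarrow(1)$, I would use the polar decomposition $X = U|X|$, with $U:H_1\to H_2$ unitary (since $X$ is invertible) and $|X| = (X^*X)^{1/2}$ positive and boundedly invertible. Assumption (3) states $[|X|^2, T_i]\in\mathcal K(H_1)$; since the positive square root of a positive invertible element lies in the $C^*$-algebra it generates, continuous functional calculus in the Calkin algebra of $H_1$ yields $[|X|, T_i]\in\mathcal K$, so that
\[
S_i \;=\; X T_i X^{-1} \;=\; U|X|T_i|X|^{-1}U^* \;=\; UT_iU^* + \text{compact},
\]
and likewise $S_i^* = UT_i^*U^* + \text{compact}$. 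Then $[S_i^*, S_j] = U[T_i^*, T_j]U^* + \text{compact}$, which shows that essential normality of the $T_i$'s is equivalent to that of the $S_i$'s; this handles both implications simultaneously.

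The main substantive step is $(1)+(2)\Rightarrow(3)$, which I plan to argue by Fuglede's theorem in a Calkin algebra. Pass to the direct sum $H = H_1\oplus H_2$ and consider the diagonal tuple $N_i := T_i\oplus S_i$ together with the off-diagonal operator $Y = \bigl(\begin{smallmatrix}0&0\\X&0\end{smallmatrix}\bigr)\in\mathcal L(H)$. Under (1) and (2) each $N_i$ is essentially normal, so $\pi(N_i)$ is a normal element of the Calkin algebra $Q(H) = \mathcal L(H)/\mathcal K(H)$, while the intertwining $XT_i = S_iX$ gives $YN_i = N_iY$ exactly. Fuglede's theorem, valid in every $C^*$-algebra, applies to $Q(H)$ and forces $[Y, N_i^*]\in\mathcal K(H)$. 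Reading off the $(2,1)$-entry gives that $XT_i^* - S_i^*X$ is compact, whose adjoint $T_iX^* - X^*S_i$ is also compact, so
\[
[X^*X, T_i] \;=\; X^*S_iX - T_iX^*X \;=\; (X^*S_i - T_iX^*)X \in \mathcal K(H_1),
\]
which is (3). The main conceptual point here is recognizing that Fuglede's theorem holds in any $C^*$-algebra, so that it may be transplanted into the Calkin algebra to upgrade exact commutation with an essentially normal element to compact commutation with its adjoint; I expect this to be the only non-bookkeeping step of the argument.

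For the final assertion, when $H_1$ and $H_2$ are isomorphic essentially normal $A$-modules the three-way equivalence above furnishes (3), and the polar-decomposition unitary $U$ from the second paragraph satisfies both $S_i = UT_iU^* + \text{compact}$ and $S_i^* = UT_i^*U^* + \text{compact}$. Conjugation by $U$ therefore induces a $*$-isomorphism between the $C^*$-subalgebras of $Q(H_1)$ and $Q(H_2)$ generated respectively by $\pi_1(T_i), \pi_1(T_i^*)$ and $\pi_2(S_i), \pi_2(S_i^*)$; in the BDF framework this is precisely the unitary equivalence of the two Toeplitz-type extensions of $C(\sigma_e)$ by $\mathcal K$, as claimed.
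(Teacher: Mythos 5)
Your proposal is correct and follows the same strategy as the paper: polar decomposition $X=U|X|$, functional calculus in the Calkin algebra to pass from $[|X|^2,T_i]\in\mathcal{K}$ to $[|X|,T_i]\in\mathcal{K}$, and Fuglede--Putnam for the $(1)+(2)\Rightarrow(3)$ direction. The only substantive differences are in the presentation of that last implication. The paper extends $\overline{\sigma}_1,\overline{\sigma}_2$ to $*$-morphisms of $C(\overline{\ball}^m)$, invokes Fuglede--Putnam directly between the two Calkin algebras, and then obtains $[\overline{S}^2,\overline{\sigma}_1(a)]=0$ by comparing $\overline{\sigma}_2(a)=(\overline{U})(\overline{S})\overline{\sigma}_1(a)(\overline{S})^{-1}(\overline{U})^*$ with the adjointed version $\overline{\sigma}_2(a)=(\overline{U})(\overline{S})^{-1}\overline{\sigma}_1(a)(\overline{S})(\overline{U})^*$. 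You instead spell out the $2\times2$ matrix argument on $H_1\oplus H_2$, which reduces the two-algebra Fuglede--Putnam statement the paper takes for granted to the one-variable Fuglede theorem in a single Calkin algebra, and then you finish with the more direct identity $[X^*X,T_i]=(X^*S_i-T_iX^*)X$. Your version is thus a slightly more self-contained account of the same argument; the paper's double-conjugation comparison is equivalent but more roundabout. One small point worth recording explicitly in your write-up: for the final assertion about unitary equivalence of extensions you implicitly use that $\pi_1(T_i)$ and $\pi_2(S_i)$ generate the relevant commutative $C^*$-subalgebras of the Calkin algebras together with their adjoints, so that conjugation by $\pi(U)$ really does give a $*$-isomorphism matching the two Busby invariants; this is routine but is the content of the clause ``the corresponding extensions are unitarily equivalent.''
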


\begin{proof}

By polar decomposition, we write 
\[
X=US, 
\]
where $S: H_1\rightarrow H_1$ is an invertible positive operator, and $U: H_1\rightarrow H_2$ is a unitary operator.  

We observe that $X$ satisfies that for any $p\in A$ and $\xi \in H_1$, 
\[
\sigma_2(p)X(\xi) =X \sigma_1(p) (\xi), 
\]
where $\sigma_i(p)$ is the representation of $p$ on $H_i$. By the polar decomposition of $X$,  we can write 
\begin{equation}\label{eq:sigma-2}
\sigma_2(p)=X\sigma_1(p)X^{-1}=US\sigma_{1}(p)S^{-1}U^*. 
\end{equation}

We compute $X^*X$ by
\[
X^*X=SU^*US=S^2. 
\]
If $X^*X=S^2$ commutes with $\sigma_1$ up to compact operators,  $S$ commutes with $\sigma_1$ up to compact operators too.  Equation (\ref{eq:sigma-2}) implies that in the Calkin algebra $\calc(H_1)$,
\begin{equation}\label{eq:equal-calkin}
\overline{\sigma_2(p)}=\overline{US\sigma_1(p)S^{-1}U^*}=(\overline{U}) (\overline{\sigma_1(p)}) (\overline{U})^*,
\end{equation}
where we have used $\overline{T}$ to denote the image of an operator $T$ in the corresponding Calkin algebra. 

The Equation (\ref{eq:equal-calkin}) shows that the unitary operator $U$ is a unitary equivalence between $\overline{\sigma}_1$ and $\overline{\sigma}_2$. From this we can conclude that 
\[
(1)\ \&\ (3)\Longrightarrow (2),\ \text{and}\ (2)\ \&\ (3)\Longrightarrow (1). 
\]

In the following we show that 
\[
(1)\ \&\ (2)\Longrightarrow (3). 
\]

By (1) and (2), we can extend $\overline{\sigma}_1:A\to \calc(H_1)$ and $\overline{\sigma}_{2}:A\to \calc(H_2)$ to  $*$-algebra morphisms 
\[
\overline{\sigma}_1: C(\overline{\ball}^m)\to \calc(H_1),\ \text{and}\ \overline{\sigma}_2: C(\overline{\ball}^m)\to \calc(H_2). 
\]
By the Fuglede-Putnam theorem and the assumption that both $\sigma_{1}(p)$ and $\sigma_{2}(p)$ are essentially normal for $p\in A$, Equation (\ref{eq:sigma-2}) implies that 
\[
\overline{\sigma}_{2}(p)^*=\overline{X}\overline{\sigma}_{1}(p)^*\overline{X}^{-1},
\]
and therefore
\[
\overline{\sigma}_2(p^*)=\overline{X}\overline{\sigma}_{1}(p^*)\overline{X}^{-1}.
\]
We conclude that for any $a\in C(\overline{\ball}^m)$, 
\begin{equation}\label{eq:similarity}
\overline{\sigma}_2(a)=\overline{X}\overline{\sigma}_{1}(a)\overline{X}^{-1}=(\overline{U})(\overline{S})\bar{\sigma}_{1}(a)(\overline{S})^{-1}(\overline{U})^*. 
\end{equation}
Taking the adjoint of the both sides of Eq. (\ref{eq:similarity}), we have 
\begin{equation}\label{eq:adjoint}
\overline{\sigma}_{2}(a^*)=\overline{\sigma}_{2}(a)^*=(\overline{X}^{-1})^*\overline{\sigma}_{1}(a)^*(\overline{X})^*=(\overline{U})(\overline{S})^{-1}\overline{\sigma}_{1}(a^*)(\overline{S})(\overline{U})^*,\ \text{for all}\ a\in C(\overline{\ball}^m). 
\end{equation}

Comparing Eq. (\ref{eq:similarity}) and Eq. (\ref{eq:adjoint}), we obtain the following equality, for all $a\in C(\overline{\ball}^m)$
\[
(\overline{S})\overline{\sigma}_{1}(a)(\overline{S})^{-1}=(\overline{S})^{-1}\overline{\sigma}_{1}(a)(\overline{S}),
\]
and 
\[
(\overline{S})^2\overline{\sigma}_{1}(a)=\overline{\sigma}_{1}(a)(\overline{S})^2.
\]
We conclude from the last equality that $\sigma_1(a)$ commutes with $S^2=X^*X$ up to compact operators. And therefore $\overline{\sigma}_1$ and $\overline{\sigma}_2$ are unitarily equivalent by (\ref{eq:equal-calkin}).
\end{proof}

Notice that both $Q_I$ and $L^2_{a, M}(\Omega_I)$ are $A$-modules. Denote $\sigma_{Q_I}$ and $\sigma_{\Omega_I}$ to be the corresponding morphisms from $C(\overline{\ball}^m)$ to $\mathcal{C}(Q_I)$ and $\mathcal{C}(\Omega_I)$, where $\calc(Q_I)$ and $\calc(\Omega_I)$ are the Calkin algebras on $Q_I$ and $L^2_{a,M}(\Omega_I)$.  Next theorem studies the relation between the two extension classes $\sigma_{Q_I}$ and $\sigma_{\Omega_I}$. 
\begin{theorem}\label{thm:essentially-normal}  Under Assumption \ref{ass:principal}, both $\bar{I}$ and the quotient $Q_I:=L^2_a(\ball^m)/\bar{I}$ are essentially normal $A$-modules. Furthermore, $Q_I$ and $L^2_{a,M}(\Omega_I)$ correspond to the same class in $K_1(\partial \Omega_I)$. 
\end{theorem}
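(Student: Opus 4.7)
The plan is to combine the Banach $A$-module isomorphism $R_{\Omega_I}: Q_I \to L^2_{a,M}(\Omega_I)$ with Proposition \ref{prop:equivalence}. By Theorem \ref{thm:extension} the restriction $R$ is surjective onto $L^2_{a,M}(\Omega_I)$, and by Theorem \ref{thm:closure} (proved in the appendix) its kernel is exactly $\bar{I}$, so the open mapping theorem promotes $R_{\Omega_I}$ to a bounded $A$-module isomorphism. Note that $L^2_a(\ball^m)$ is essentially normal by classical Bergman-ball estimates, and $L^2_{a,M}(\Omega_I)$ is essentially normal by Corollary \ref{cor:index}.

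The main technical step I would carry out is to show that $R^{*}R$ commutes with the $A$-action on $L^2_a(\ball^m)$ modulo compacts. The intertwining identity $R\,\sigma(z_i)=\sigma_{\Omega_I}(z_i)\,R$ descends to the Calkin algebras, where both $\bar\sigma(z_i)$ and $\bar\sigma_{\Omega_I}(z_i)$ are normal. The bimodule version of the Fuglede-Putnam theorem (obtained by assembling $\sigma(z_i)\oplus \sigma_{\Omega_I}(z_i)$ into a single normal element on $L^2_a(\ball^m)\oplus L^2_{a,M}(\Omega_I)$ with $R$ placed as an off-diagonal block) then yields $\sigma_{\Omega_I}(z_i)^{*}R\equiv R\sigma(z_i)^{*}\pmod{\mathcal{K}}$; taking adjoints gives $R^{*}\sigma_{\Omega_I}(z_i)\equiv \sigma(z_i)R^{*}\pmod{\mathcal{K}}$. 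Using $A$-linearity of $R$,
\[
[R^{*}R,\sigma(z_i)]=\bigl(R^{*}\sigma_{\Omega_I}(z_i)-\sigma(z_i)R^{*}\bigr)R
\]
is therefore compact.

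Next I would exploit the orthogonal decomposition $L^2_a(\ball^m)=\bar{I}\oplus \bar{I}^{\perp}$. Since $\bar{I}$ is $A$-invariant, $\sigma(z_i)$ has the upper-triangular block form $\left(\begin{smallmatrix}A_i&B_i\\ 0&D_i\end{smallmatrix}\right)$, while $R^{*}R$ is block-diagonal with entries $0$ on $\bar{I}$ and a positive invertible operator $S$ on $\bar{I}^{\perp}$ (invertibility of $S$ follows from the open mapping theorem applied to $R|_{\bar{I}^{\perp}}$). Computing in these coordinates gives
\[
[R^{*}R,\sigma(z_i)]=\begin{pmatrix}0&-B_iS\\ 0&[S,D_i]\end{pmatrix},
\]
and the compactness established above, together with invertibility of $S$, forces $B_i$ to be compact. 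Hence $[P_{\bar{I}},\sigma(z_i)]$ is compact, and a short calculation with the block form of $[\sigma(z_i),\sigma(z_j)^{*}]$ then shows that both $\bar{I}$ and $Q_I$ are essentially normal.

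Finally, for the $K$-homology identification I would apply Proposition \ref{prop:equivalence} to $X=R_{\Omega_I}$: conditions (1) and (2) have now both been established, so the corresponding extensions of $Q_I$ and $L^2_{a,M}(\Omega_I)$ are unitarily equivalent and define the same class in $K_1(\partial\Omega_I)$. I expect the most delicate point to be the compactness of $[R^{*}R,\sigma(z_i)]$; this step really requires the essential normality of $L^2_{a,M}(\Omega_I)$ provided by the singular Boutet de Monvel-type index theorem of Section \ref{sec:index}, without which the Fuglede-Putnam input is unavailable.
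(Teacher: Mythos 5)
Your proof is correct, but it takes a more self-contained route than the paper where you could have invoked existing results. The paper's proof is much shorter: it applies $R$ to the short exact sequence $0\to\ker R\to L^2_a(\ball^m)\to L^2_{a,M}(\Omega_I)\to 0$, cites \cite[Thm.\ 1]{do:index} (surjective module map between essentially normal modules has essentially normal kernel) to get $\bar{I}=\ker R$ essentially normal after invoking Theorem \ref{thm:closure}, then cites \cite[Thm.\ 1]{do:module} to get $Q_I$ essentially normal, and finally applies Proposition \ref{prop:equivalence} exactly as you do. Your Fuglede--Putnam argument (assembling the two representations and $R$ into a single $2\times2$ block normal element to obtain $R^*\sigma_{\Omega_I}(z_i)\equiv\sigma(z_i)R^*$ mod compacts, hence $[R^*R,\sigma(z_i)]$ compact) together with the block decomposition $L^2_a(\ball^m)=\bar I\oplus\bar I^\perp$ showing $B_i$ compact and hence $[P_{\bar I},\sigma(z_i)]$ compact is in effect a direct re-derivation of the cited Douglas theorems. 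This buys you a more explicit, from-scratch argument and makes transparent where Corollary \ref{cor:index} enters (exactly as you note, without the essential normality of $L^2_{a,M}(\Omega_I)$ the Fuglede--Putnam input is unavailable); the paper's route is shorter and modular, delegating the operator-theoretic heavy lifting to prior work. One small remark: your first use of Fuglede--Putnam already reproduces the implication $(1)\ \&\ (2)\Rightarrow(3)$ from Proposition \ref{prop:equivalence}, so once $Q_I$ is established as essentially normal you can quote that proposition verbatim, as you indeed do at the end.
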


\begin{proof} We consider the exact sequence of $A$-modules, 
\[
0\longrightarrow \ker{R_{\Omega_I}}\stackrel{\iota}{\longrightarrow} L^2_a(\ball^m)\stackrel{R}{\longrightarrow} L^2_{a,M}(\Omega_I)\longrightarrow 0. 
\]
By Theorem \ref{thm:extension},  $R$ is a surjective $A$-module map. According to Corollary \ref{cor:index}, $L^2_{a,M}(\Omega_I)$ is an essentially normal $A$-module. We conclude from \cite[Thm. 1]{do:index} that the kernel $\ker R$ is an essentially normal $A$-module as $L^2_a(\ball^m)$ is also essentially normal. By Theorem \ref{thm:closure}, the closure $\bar{I}$ of $I$ in $L^2_a(\ball^m)$ agrees with the kernel $\ker R $.  Hence, $\bar{I}$ is an essentially normal $A$-module. 

Now consider the following exact sequence
\[
0\longrightarrow \bar{I}\longrightarrow L^2_a(\ball^m)\longrightarrow Q_I\longrightarrow 0. 
\] 
By \cite[Thm. 1]{do:module}, the essentially normality of  the ideal $\bar{I}$ implies that the quotient module $Q_I=L^2_a(\ball^m)/\bar{I}$ is essentially normal. 

As $\overline{I}=\ker{R}$, $R_{\Omega_I}: Q_I\to L^2_{a,M}(\Omega_I)$ is an isomorphism of essentially normal $A$-modules. Apply Proposition \ref{prop:equivalence} to $R_{\Omega_I}$. We conclude that $Q_I$ and $L^2_{a,M}(\Omega_I)$ give rise to the same $K$-homology class in $K_1(\partial \Omega_I)$. 
\end{proof}

\begin{remark} \label{rmk:schatten-p}Theorem \ref{thm:essentially-normal} states that $Q_I$ and $L^2_{a, M}(\Omega_I)$ are equivalent $K$-homology classes. As is suggested in \cite{bdt:boundary}, a sharper estimate of Sobolev norms proves that $L^2_{a,s}(\Omega_I)$ is a Schatten-$p$ class module for $p> k$. Our proof of Theorem \ref{thm:essentially-normal} does not show that the ideal $\bar{I}$ or the quotient module $Q_I$ is a Schatten-$p$ class module, though we expect that they are and $Q_I$ is equivalent to $L^2_{a,M}$ as Schatten-$p$ class modules. 
\end{remark}

\begin{remark}\label{rmk:ee} In Theorem \ref{thm:essentially-normal}, we proved that the kernel $\ker R$ is an essentially normal $A$-module,  when $Z_I$ is the zero variety of functions $f_1, \cdots, f_M$ that are holomorphic on a neighborhood of the closed ball $\overline{\ball}^m$ and satisfy Assumption \ref{ass:principal}.  This is a variant of the ``Geometric Arveson-Douglas Conjecture" proposed by Englis and Eschmeier \cite{ee:geometric}. Englis and Eschmeier \cite{ee:geometric} proved that the kernel $\ker R$ is an essentially normal $A$-module when $Z_I$ is a homogeneous variety with the only singularity at the origin $O\in \mathbb{C}^m$. This result and our Theorem \ref{thm:essentially-normal} are independent supporting evidences for the ``Geometric Arveson-Douglas Conjecture." The two results use different methods. The main tool in \cite{ee:geometric} is the Boutet de Monvel-Guillemin theory of generalized Toeplitz operators, while the main tool in this paper is the Baum-Douglas-Taylor theory of relative $K$-homology for manifolds with boundaries. 
\end{remark}

In \cite{do-wa:quotient}, the first author and Wang proved that when $I$ is a principal ideal of $A=\mathbb{C}[z_1, \cdots, z_m]$, the quotient Hilbert module $Q_I$ is essentially normal. Let $p$ be a generator of $I$. The zero set of $p$ is a hypersurface $Z_I$ of $\complex^m$. Assumption \ref{ass:principal} in this case requires that the 1-form $dp$ is everywhere nonzero on  $\partial \Omega_I$ and $Z_I$ intersects with the sphere $\partial \mathbb{B}^m$ transversely.  

\begin{corollary}\label{cor:principal} For $m\geq 3$, when $I$ is generated by $p\in A$ and satisfies Assumption \ref{ass:principal}, the K-homology class of $Q_I$ is the fundamental class of $\partial \Omega_I$. 
\end{corollary}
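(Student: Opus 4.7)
The plan is to deduce Corollary \ref{cor:principal} as an immediate specialization of Theorem \ref{thm:essentially-normal} combined with Corollary \ref{cor:index}. First I would check that the hypotheses of the corollary fit within Assumption \ref{ass:principal}: in the principal ideal case one has $M=1$, the condition $m\geq 3$ is precisely $M\leq m-2$, so the complex dimension $k=m-1\geq 2$ of $\Omega_I$ is at least two (which is what is needed for the Hartogs principle on $\Omega_I$). The nonvanishing of $dp$ on $\partial\Omega_I$ is the maximal-rank Jacobian condition for a single generator, and transversality of $Z_I$ with $\mathbb{S}^{2m-1}$ is the second half of Assumption \ref{ass:principal}. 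Note that $dp$ is allowed to vanish in the interior of $\Omega_I$, so $\Omega_I$ may still carry isolated singularities away from the boundary, which is exactly what our singular index theorem is designed to handle.

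With the hypotheses verified, Theorem \ref{thm:essentially-normal} yields the equality of $K$-homology classes $[Q_I]=[L^2_{a,M}(\Omega_I)]=[L^2_{a,1}(\Omega_I)]$ in $K_1(\partial\Omega_I)$. The key ingredients behind this step are the surjective restriction map $R\colon L^2_a(\ball^m)\to L^2_{a,1}(\Omega_I)$ from Theorem \ref{thm:extension}, the identification $\ker R=\bar{I}$ from Theorem \ref{thm:closure}, and Proposition \ref{prop:equivalence}, which promotes the induced $A$-module isomorphism $R_{\Omega_I}\colon Q_I\to L^2_{a,1}(\Omega_I)$ between essentially normal modules to a unitary equivalence of extensions. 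Applying Corollary \ref{cor:index}(2) with $s=M=1\geq 0$ then identifies $[L^2_{a,1}(\Omega_I)]$ with the fundamental class $[D_{\partial\Omega_I}]$ of $\partial\Omega_I$ associated to the CR-structure, and chaining the two identifications concludes the proof.

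I do not foresee any real obstacle; the substantive work has already been done in Sections \ref{sec:index} and \ref{sec:restriction} and in the appendix. The only point worth double-checking is that Assumption \ref{ass:principal}, which a priori requires $M$ generators to form a complete intersection with a full-rank Jacobian on the boundary, is genuinely equivalent in the principal case to the single condition $dp\neq 0$ on $\partial\Omega_I$ together with the stated transversality; this is clear since one generator with nonzero differential at each boundary point automatically cuts out a codimension-one subvariety near the boundary, and the corresponding ideal is automatically radical.
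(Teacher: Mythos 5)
Your proposal is correct and matches what the paper intends: the corollary is an immediate specialization, obtained by chaining Theorem \ref{thm:essentially-normal} (which gives $[Q_I]=[L^2_{a,M}(\Omega_I)]$ in $K_1(\partial\Omega_I)$) with Corollary \ref{cor:index}(2) applied at $s=M=1$ (which identifies $[L^2_{a,1}(\Omega_I)]$ with the fundamental class $[D_{\partial\Omega_I}]$). Your verification that the principal-ideal hypotheses ($M=1$, $m\geq 3$ so that $k=m-1\geq 2$, $dp\neq 0$ on $\partial\Omega_I$, transversality) instantiate Assumption \ref{ass:principal} is exactly the point the paper flags in the paragraph preceding the corollary.
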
 

As a special example of Corollary \ref{cor:principal}, we consider the following polynomial 
\[
p_k(z_1, \cdots, z_5)= z_1^2+z_2^2+z_3^2+ z_4^3+ z_5^{6k-1} \in \mathbb{C}[z_1, \cdots, z_5],\ k\geq 1.
\]
The zero variety $Z_{p_k}$ of $p_k$ has an isolated singularity at the origin, and when $\epsilon>0$ is sufficiently small, $Z_{p_k}$ intersects with the sphere $\mathbb{S}^9_\epsilon=\partial \ball^5_\epsilon=\overline{\ball}_\epsilon^5\backslash \ball_\epsilon^5$ transversely \cite{br:exotic}, \cite{hi:survey}, where $\ball^5_\epsilon$ is the open ball of radius $\epsilon$  around the origin. Hence the conditions of Corollary \ref{cor:principal} are satisfied on $\ball^5_\epsilon$. We conclude that $Q_{I_k}$ gives the fundamental class of the boundary $\partial \Omega^\epsilon_{I_k}=Z_{p_k}\cap \partial \ball^5_\epsilon$.  The boundary $\partial \Omega^\epsilon_{I_k}$ is a topological $7$-sphere $\mathbb{S}^7$. When $k=1, \cdots, 28$, the differentiable structures on $Z_{p_k}\cap \partial \ball_\epsilon^5 $ give all the different  differentiable structures on  $\mathbb{S}^7$. Corollary \ref{cor:principal} offers a possibility to use operator algebra tools to study differentiable topology on $\mathbb{S}^7$. We plan to come back to this question in the near future. 

\section{Concluding remarks}\label{sec:rmk}

We end this article with a few remarks. 
\subsection{Assumption \ref{ass:principal} on complete intersection}\label{subsec:ass}
Our results in this article rely crucially on Assumption \ref{ass:principal} that the ideal $I$ is generated by $p_1,\cdots, p_M\in \complex[z_1, \cdots, z_m]$ with $M\leq m-2$ such that the Jacobian matrix $(\partial p_i/\partial z_j)$ is of maximal rank on the boundary $\partial \Omega_I=Z_I\cap \partial \ball^m$ and the zero variety $Z_i$ intersects $\partial \ball^m$ transversely. 

Using the concept of depth \cite[Sec.18.5]{ei:book} in algebraic geometry, we can easily show that Assumption \ref{ass:principal} implies that the ideal $I$ is radical. Let $\Sigma_I$ be the set of singular sets in $\Omega_I$. Assumption \ref{ass:principal} also implies that $\Sigma_I$ is a finite discrete set having no intersection with the boundary $\partial \Omega_I$. Furthermore, Assumption \ref{ass:principal} implies that $A/I$ is a complete intersection ring \cite[Sec.18.5]{ei:book}, which assures that \cite[Ch.3, Ex.3.5]{ha:book} any analytic function on $\Omega_I-\Sigma_I$ has an extension to $\Omega_I$.  This is the key in our proof that the restriction map $R:L^2_a(\ball^m)\to L^2_{a,M}(\Omega_I)$ is surjective. 

As is explained in Proposition \ref{cor:principal}, there are many examples satisfying Assumption \ref{ass:principal}. On the other hand, there are also many interesting examples on which Assumption \ref{ass:principal} fails to hold. Let $\langle z_1, z_2\rangle$  (and $\langle z_3, z_4\rangle$) be ideals of $\complex[z_1, \cdots, z_4]$ generated by $z_1, z_2$ (and $z_3, z_4$). Consider the ideal $I$ of $\complex[z_1, \cdots, z_4]$ as the product of $\langle z_1, z_2\rangle$ and $\langle z_3, z_4\rangle$.  $I$ is generated by $z_1z_3, z_1z_4, z_2z_3, z_2z_4$. If Assumption \ref{ass:principal} holds for $I$, $Z_I$ has dimension $0$. But the zero variety of $I$ has dimension 2. This shows that $I$ fails to satisfy Assumption \ref{ass:principal}. In general,  the ideal $I$ fails to satisfy the Cohen-Macaulay condition \cite[Sec.18.2]{ei:book}, and therefore there are analytic functions on $\Omega_I-\Sigma_I$ that cannot be extended to $\Omega_I$. Such a failure suggests that the restriction map $R:L^2_a(\ball^m)\to L^2_{a,M}(\Omega_I)$ cannot be surjective for the ideal $I=\langle z_1, z_2\rangle\langle z_3, z_4\rangle$. On the other hand, in this case the range of the restriction map $R$  still has a finite codimension, which is sufficient for us to conclude Theorem \ref{thm:essentially-normal}. This suggests to generalize our results in this article to include examples like $I=\langle z_1, z_2\rangle\langle z_3, z_4\rangle$. We will come back to this in the near future. 

\subsection{Non-radical ideals}

In this article, we have considered only radical ideals. Chen, the first author, Keshari, and Xu take up the simplest non-radical cases, the ideal $I_{\underline{\alpha}}$ generated by the monomial $\underline{z}^{\underline{\alpha}}=z_1^{\alpha_1}\cdots z_m^{\alpha_m}$ for non-negative integers $(\alpha_1, \cdots, \alpha_m)$. They show that $I_{\underline{\alpha}}$ defines a sequence of zero sets $Z_k\subseteq Z_{k-1}\subseteq\cdots Z_2\subseteq Z_1$, where $Z_i$ is a union of hyperplanes intersecting at the origin such that a hyperplane for $Z_i$ contains a hyperplane for $Z_{i+1}$ or is otherwise distinct from it. 

The ideas in this paper can be applied to study non-radical ideals. When $I$ is not radical, the kernel of the restriction map $R:L^2_a(\ball^m)\to L^2_{a,M}(\Omega_I)$ is not the closure of $I$. However, by taking the zero variety $Z_I$ in the sense of schemes in algebraic geometry, we expect to enrich the space $L^2_{a,M}(\Omega_I)$ to a bigger Hilbert space $\mathcal{L}^2_{a, M}(\Omega_I)$. With the bigger space $\mathcal{L}^2_{a,M}(\Omega_I)$, we expect to show that the restriction map $\mathcal{R}$ is surjective or has a finite codimensional range and the kernel $\ker(R)$ is the closure of $I$. With this modification, we can use the methods developed in this article to study the essentially normal property of $\overline{I}$ and $Q_I$.  

The following example might be useful in understanding this phenomenon. Let $m=2$, and $I,J$ be the principal ideals generated by $z_1$ and $z_1^2$, respectively. Then the zero variety of both $I$ and $J$ is $Z_I=Z_J=\{z_1=0\}$. And $\Omega_I=\Omega_J$ is $Z_I\cap \ball^2$. The kernel $\ker R$ of the usual restriction map $R_I: L^2_{a}(\ball^2)\to L^2_{a, 1}(\Omega_I)$ is the closure of $I$ in $L^2_a(\ball^2)$. Let $\mathcal{L}^2_{a, 1}(\Omega_J)$ be $ L^2_{a,1}(\Omega_I)\oplus L^2_{a,2}(\Omega_I)$. Define the restriction map $R_J: L^2_a(\ball^2)\to\mathcal{L}^2_{a, 1}(\Omega_J)$ to be
\[
R_J(f)=f|_{z_1=0}\oplus \frac{\partial f}{\partial z_1}|_{z_1=0}.
\] 
It is straight forward to check that the kernel $\ker R_J$ of the map $R_J$ is the closure of  $J$ in $L^2_{a}(\ball^2)$, and $R_J$ is surjective. Therefore, the results of this paper extend to show that the closure $\overline{J}$ and the quotient $L^2_a(\ball^2)/\overline{J}$ are essentially normal $A$-modules (see also \cite{gw:essential-normal1}). 
 
\section{Appendix: the closure of the ideal $I$ in $L^2_a(\ball^m)$}

\begin{center}by R. Douglas, X. Tang, K. Wang\footnote{School of Mathematical Sciences, Fudan University, Shanghai, 200433, P.R. China, Email: kwang@fudan.edu.cn}
, and G. Yu
\end{center}

\vskip 3mm

In this appendix, we study the closure of the ideal $I$ in $L^2_a(\ball^m)$. We prove that under Assumption \ref{ass:principal} if $f\in L^2_a(\ball^m)$ vanishes on $\Omega_I$, then $f$ is contained in the closure $\overline{I}$ of $I$ in $L^2_a(\ball^m)$.  Related results can be found in \cite{am:ideal} and \cite{pu:dense}. 

Let $\calo(\overline{\ball}^m)$ be the algebra of holomorphic functions on the closed ball $\overline{\ball}^m$.  $\calo(\overline{\ball}^m)$ with the natural topology is a (topological) unital commutative noetherian ring \cite{ge:rings} satisfying the Hilbert nullstellensatz, i.e. every ideal of $\calo(\overline{\ball}^m)$ is either dense, or contained in a maximal ideal $\calm$ in $\calo(\overline{\ball}^m)$ such that the $\calm$-adic topology is weaker than the topology on $\calo(\overline{\ball}^m)$. 

\begin{lemma}\label{lem:local} Let $I$ be a prime ideal of $\calo(\overline{\ball}^m)$ and $f\in \calo(\overline{\ball}^m)$.  Denote $Z_I$ to be the zero variety of the ideal $I$. If there is  a point $z_0\in Z_I\cap \ball^m$ and an open set $U$ of $z_0$ in $\ball^m$, such that $f $ belongs to the ideal $I\calo(U)$ generated by $I$ in $\calo(U)$, the ring of holomorphic functions on $U$, then $f\in I$. 
\end{lemma}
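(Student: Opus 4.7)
The plan is to combine a Taylor approximation argument with Krull's intersection theorem in the Noetherian domain $\calo(\overline{\ball}^m)/I$. Since $\calo(\overline{\ball}^m)$ is Noetherian (as cited), write $I=(g_1,\dots,g_r)$. By hypothesis $f=\sum_{i=1}^r h_i g_i$ in $\calo(U)$ for some $h_i\in\calo(U)$. For each integer $N\geq 1$, let $p_i^{(N)}\in\complex[z_1,\dots,z_m]$ be the Taylor polynomial of $h_i$ at $z_0$ of degree strictly less than $N$, so that the germ of $h_i-p_i^{(N)}$ at $z_0$ vanishes to order at least $N$. Set $\tilde f_N:=\sum_i p_i^{(N)}g_i$, which lies in $I$, and observe that $f-\tilde f_N\in\calo(\overline{\ball}^m)$ has a Taylor expansion at $z_0$ that vanishes to order at least $N$.

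The next step is to promote this Taylor-vanishing statement to a membership in $\calm_{z_0}^N$, where $\calm_{z_0}\subset\calo(\overline{\ball}^m)$ is the maximal ideal of functions vanishing at $z_0$. Since $z_0\in\ball^m$ and $\overline{\ball}^m$ is star-shaped about $z_0$, the integral identity $g(z)=\sum_i(z_i-z_{0,i})\int_0^1(\partial_i g)(z_0+t(z-z_0))\,dt$ furnishes a global Hadamard decomposition inside $\calo(\overline{\ball}^m)$, so $\calm_{z_0}=(z_1-z_{0,1},\dots,z_m-z_{0,m})$. Iterating the decomposition, $\calm_{z_0}^N$ is exactly the set of elements of $\calo(\overline{\ball}^m)$ whose Taylor expansion at $z_0$ vanishes to order at least $N$. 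Hence $f-\tilde f_N\in\calm_{z_0}^N$, and consequently $f\in I+\calm_{z_0}^N$ for every $N\geq 1$.

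Finally, pass to $R:=\calo(\overline{\ball}^m)/I$. Because $I$ is prime and $\calo(\overline{\ball}^m)$ is Noetherian, $R$ is a Noetherian integral domain. The inclusion $I\subset\calm_{z_0}$ (valid because $z_0\in Z_I$) ensures that the image $\overline{\calm}$ of $\calm_{z_0}$ in $R$ is a proper (in fact maximal) ideal. The previous paragraph then yields $\bar f\in\bigcap_{N\geq 1}\overline{\calm}^N$. Krull's intersection theorem in a Noetherian domain forces $\bigcap_{N}\overline{\calm}^N=0$, so $\bar f=0$, i.e.\ $f\in I$.

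The only non-formal input is the Hadamard-type identification of $\calm_{z_0}^N$ with the space of functions whose Taylor expansion at $z_0$ vanishes to order $N$; this is the step that genuinely uses $z_0\in\ball^m$ (so that $\overline{\ball}^m$ is star-shaped about $z_0$ and the integral formula stays inside $\calo(\overline{\ball}^m)$). Everything else is formal, using only Noetherianness of $\calo(\overline{\ball}^m)$ and primeness of $I$; in particular, the deeper Nullstellensatz machinery quoted in the paragraph preceding the lemma is not needed for this result.
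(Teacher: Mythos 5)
Your proof is correct and follows essentially the same route as the paper's: approximate the local coefficients $h_i$ by polynomials (Taylor truncations) to show $f\in I+\calm_{z_0}^N$ for every $N$, then invoke Krull's intersection theorem to conclude $f\in\bigcap_N(I+\calm_{z_0}^N)=I$. You are in fact slightly more careful than the published proof at two points: you supply the Hadamard/star-shapedness argument justifying that a Taylor expansion vanishing to order $N$ at $z_0$ forces membership in $\calm_{z_0}^N$ within $\calo(\overline{\ball}^m)$, and you make explicit the passage to the Noetherian domain $\calo(\overline{\ball}^m)/I$, which is where primeness of $I$ is actually used in the Krull step.
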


\begin{proof} Let $M_{z_0}$ be the maximal ideal of $\calo(\overline{\ball}^m)$ generated by analytic functions in $\calo(\overline{\ball}^m)$ vanishing at $z_0$. Consider the $M_{z_0}$-adic completion of $I$ in $\calo(\overline{\ball}^m)$. As $z_0\in Z_I$, $I\subseteq M_{z_0}$ and $I+M_{z_0}=M_{z_0}$. By Krull's theorem \cite[Sec. 2.1]{cg:book},  the $M_{z_0}$-adic closure of $I$ in $\calo(\overline{\ball}^m)$ is $I$.  By definition, the $M_{z_0}$-adic closure of $\calo(\overline{\ball}^m)$ is 
\[
\bigcap_{j\geq 1} [I+M_{z_0}^j]. 
\]

Consider the function $f$. We prove that $f$ is in $I+M_{z_0}^j$ for every $j\geq 1$. On $U$, as $f$ belongs to the ideal $I\calo(U)$, there are $p_1, \cdots, p_l\in I$, and $h_1,\cdots, h_l\in \calo(U)$ such that 
\[
f=\sum_{i=1}^l p_i h_i. 
\]
Choose $g_1, \cdots, g_l$ in the polynomial ring $A=\complex[z_1, \cdots, z_m]$ such that $h_i-g_i$ vanishes at $z_0$ of multiplicity $j$. Observe that 
\[
f-\sum_{i=1}^l p_ig_i=\sum_{i=1}^l p_i(h_i-g_i)
\]
vanishes at $z_0$ of multiplicity $j$. Hence, $f-\sum_{i=1}^l p_ig_i$ is contained in $M^j_{z_0}$. Therefore, we have proved that $f$ belongs to $I+M_{z_0}^j$. Varying $j$ through all natural numbers, we conclude that $f$ belongs to $I=\bigcap_{j\geq 1} [I+M_{z_0}^j]$. 
\end{proof}

\begin{lemma}\label{lem:gelca} Under Assumption \ref{ass:principal}, if $f\in \calo(\overline{\ball}^m)$ vanishes on $\Omega_I$, then $f$ belongs to the closure $\langle I\rangle$ of $I$ in $\calo(\overline{\ball}^m)$, and therefore in the closure $\overline{I}$ of $I$ in $L^2_a({\ball}^m)$. 
\end{lemma}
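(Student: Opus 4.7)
The plan is to apply Lemma \ref{lem:local} one prime at a time, after decomposing $I\calo(\overline{\ball}^m)$ into prime ideals, and then to pass from the algebraic extension $I\calo(\overline{\ball}^m)$ to the topological closure $\langle I\rangle$ by polynomial approximation. Under Assumption \ref{ass:principal}, $\Omega_I$ is a complete intersection of pure dimension $k=m-M$ with only finitely many isolated singular points, all contained in the interior of $\ball^m$. A local complete intersection is Cohen--Macaulay, and the Jacobian criterion gives generic reducedness on the dense smooth locus; for a Cohen--Macaulay germ, generic reducedness upgrades to full reducedness. Hence the ideal sheaf generated by $p_1,\ldots,p_M$ is radical at every stalk, so $I\calo(\overline{\ball}^m)$ is itself a radical ideal in the Noetherian ring $\calo(\overline{\ball}^m)$ and admits a primary decomposition
\[
I\calo(\overline{\ball}^m)=P_1\cap\cdots\cap P_r,
\]
where each $P_j$ is prime in $\calo(\overline{\ball}^m)$ and $Z_{P_j}$ is an analytic irreducible component of $Z_I$ meeting $\overline{\ball}^m$.

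Fix $j$ and choose a smooth point $z_0\in Z_{P_j}\cap\ball^m$, which exists because the singular locus of $Z_I$ is finite and cannot exhaust $Z_{P_j}\cap\ball^m$. Choose a connected open neighborhood $U\subset\ball^m$ of $z_0$ small enough that $U\cap Z_I=U\cap Z_{P_j}$ and $(\partial p_i/\partial z_j)$ has maximal rank throughout $U$. On $U$, a suitable $M$-subset of $\{p_1,\ldots,p_M\}$ extends to holomorphic coordinates transverse to $Z_{P_j}\cap U$, so the stalk $P_j\calo_{z_0}$ is prime and coincides with $(p_1,\ldots,p_M)\calo_{z_0}$. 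Since $f$ vanishes on $\Omega_I$, and in particular on $Z_I\cap U$, the holomorphic Nullstellensatz (R\"uckert's theorem) gives $f_{z_0}\in(p_1,\ldots,p_M)\calo_{z_0}$. After shrinking $U$, one writes $f=\sum_i h_i p_i$ on $U$ with $h_i\in\calo(U)$, which places $f$ in $P_j\calo(U)$. Lemma \ref{lem:local}, applied to the prime ideal $P_j$ and the point $z_0\in Z_{P_j}\cap\ball^m$, then yields $f\in P_j$. Since $j$ was arbitrary,
\[
f\in\bigcap_{j=1}^r P_j=I\calo(\overline{\ball}^m).
\]

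To conclude $f\in\langle I\rangle$ and hence $f\in\overline{I}\subset L^2_a(\ball^m)$, I approximate any representation $f=\sum_i h_i p_i$ with $h_i\in\calo(\overline{\ball}^m)$ by replacing each $h_i$ with the partial sums of its Taylor series at the origin. These converge to $h_i$ uniformly on $\overline{\ball}^m$, because $h_i$ extends holomorphically to a slightly larger ball. The resulting combinations $\sum_i q_{i,n}p_i$ lie in $I$ and converge to $f$ uniformly on $\overline{\ball}^m$, which implies convergence in $L^2_a(\ball^m)$ as well. The main obstacle in this plan is the first step: verifying that the complete intersection and isolated singularity structure provided by Assumption \ref{ass:principal} forces $I\calo(\overline{\ball}^m)$ to be radical with prime components $P_j$ admitting accessible smooth points of $Z_{P_j}$ inside $\ball^m$. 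This rests on the interplay between Cohen--Macaulayness, coming from the regular sequence $p_1,\ldots,p_M$, and generic smoothness coming from the Jacobian condition, together with the compatibility of the analytic and algebraic prime decompositions near $\overline{\ball}^m$.
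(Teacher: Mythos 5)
Your proposal shares the key ingredients with the paper's proof (primary decomposition of $I\calo(\overline{\ball}^m)$, local Nullstellensatz to place $f$ in the local ideal, Lemma~\ref{lem:local} to promote local membership to global membership in a primary component) but diverges on the main structural step, and that divergence is exactly where your acknowledged gap sits. The paper does \emph{not} attempt to show $f\in I\calo(\overline{\ball}^m)$. Instead it invokes Gelca's Theorem~1.3 directly: the closure $\langle I\rangle$ of $I$ in $\calo(\overline{\ball}^m)$ is the intersection of only those primary components $I_i$ that are contained in a closed maximal ideal $M_{z_i}$. This has two advantages. First, it makes radicality of $I\calo(\overline{\ball}^m)$ entirely irrelevant --- the argument works with primary (not prime) components, and Lemma~\ref{lem:local} applies equally well to primary ideals (it uses only that the $M_{z_0}$-adic closure of the ideal is itself, which the Krull argument gives for primary ideals contained in $M_{z_0}$). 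Second, Gelca's criterion automatically discards any primary component without a geometric zero in $\overline{\ball}^m$, so there is no need to argue that every minimal prime of $I\calo(\overline{\ball}^m)$ is geometrically realized. The paper then explicitly invokes the transversality hypothesis of Assumption~\ref{ass:principal} to show that any component touching $\partial\ball^m$ must cross into $\ball^m$, producing an interior smooth point at which Lemma~\ref{lem:local} can be applied.

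By contrast, your plan commits to showing $f\in I\calo(\overline{\ball}^m)$ outright, which requires (a) $I\calo(\overline{\ball}^m)$ to be radical with prime minimal components $P_j$, and (b) each $Z_{P_j}$ to contain a smooth point in the open ball $\ball^m$. For (a), the Cohen--Macaulay plus generic-reducedness argument is natural, but transferring it faithfully to the germ ring $\calo(\overline{\ball}^m)$ requires checking reducedness stalkwise (which R\"uckert gives at smooth points, and the complete-intersection structure at the finitely many interior singular points) and then globalizing via Noetherianity and Frisch's theorem --- you gesture at this but do not carry it out. For (b), you assert that a smooth $z_0\in Z_{P_j}\cap\ball^m$ exists but do not rule out $Z_{P_j}\cap\ball^m=\emptyset$; this is precisely where you need the transversality of $Z_I$ with $\partial\ball^m$ from Assumption~\ref{ass:principal}, which forces any component meeting $\partial\ball^m$ to enter the interior. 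You also need the nontrivial fact that every proper prime of $\calo(\overline{\ball}^m)$ actually has a zero in $\overline{\ball}^m$ (a Cartan~B argument). You flag these issues yourself as ``the main obstacle,'' and that self-diagnosis is accurate: the missing ingredient is exactly what Gelca's Theorem~1.3 is engineered to avoid. Your final approximation-by-Taylor-polynomials step is fine and is also implicit in the paper's ``therefore'' passing from $\langle I\rangle$ to $\overline{I}$, but it presupposes $f\in I\calo(\overline{\ball}^m)$, which your argument has not yet secured. Were the radicality and accessibility points fully established, your route would yield the slightly sharper conclusion $f\in I\calo(\overline{\ball}^m)$, but the paper's use of Gelca's criterion is the more economical path to $f\in\langle I\rangle$, which is all the Lemma needs.
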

\begin{proof}We consider $\calo(\overline{\ball}^m)$ with the topology from $H^\infty(\ball^m)$ and the ideal $I\calo(\overline{\ball}^m)$ in $\calo(\overline{\ball}^m)$. Let $I\calo(\overline{\ball}^m)=\cap_i^l I_i$ be the  irredundant  primary decomposition of $I\calo(\overline{\ball}^m)$. By \cite[Theorem 1.3]{ge:rings}, the closure of $I\calo(\overline{\ball}^m)$ in $\calo(\overline{\ball}^m)$ is the intersection of those $I_i$ that is contained in a closed maximal ideal $M_{z_i}$. It is sufficient to prove that $f$ belongs to all those $I_i$ that is contained in a closed maximal ideal $M_{z_i}$. 

By Assumption \ref{ass:principal}, $Z_I$ is smooth near the boundary $\partial \ball^m$  and intersects with $\partial \ball^m$ transversely. Hence, if there is a point $z_i\in Z_{I_i}\cap \partial \ball^m$, $z_i$ is a smooth point of $Z_I$, and therefore a smooth point of $Z_{I_i}$. We remark that as $z_i$ is a smooth point of $Z_I$, there can not be a different $I_{i'}$ such that $Z_{I_i}$ and $Z_{I_{i'}}$ intersect at $z_i$. Since $Z_I$ intersects with the boundary $\partial \ball^m$ transversely, $Z_{I_i}$ must intersect with the boundary $\partial \ball^m$ transversely at $z_i$ too. So there must be a point $\tilde{z}_i$ in $Z_{I_i}\cap \ball^m$. Furthermore, on a sufficiently small neighborhood $U_i$ of $\tilde{z}_i$ in $\ball^m$, $f$ vanishes on $Z_{I_i}\cap U_i$, and therefore $f$ is contained in the ideal $I_i \calo(U_i)$ generated by $I_i$ in $\calo(U_i)$.  Lemma \ref{lem:local} applies to $f$ and $I_i$ with $U_i$ and $\tilde{z}_i$. Therefore, $f$ is contained in $I_i$ for all $i$ such that $I_i$ is contained in a closed maximal ideal $M_{z_i}$.  Hence, $f$ is in the closure $\langle I \rangle$ of $I$ in $\calo(\overline{\ball}^m)$, and therefore also in the closure $\overline{I}$ in $L^2_a(\ball^m)$. 
\end{proof}

Let $R$ be the restriction operator $R: L^2_a(\ball)\to L^2_{a, M}(\Omega_I)$.  
\begin{theorem}\label{thm:closure}Under Assumption \ref{ass:principal}, the closure $\overline{I}$ of the ideal $I$ in $L^2_a(\ball^m)$ is equal to $\ker R$. 
\end{theorem}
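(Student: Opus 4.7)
The inclusion $\overline{I}\subseteq \ker R$ is immediate: $R$ is continuous by Theorem~\ref{thm:extension}, and every $p\in I$ vanishes pointwise on $\Omega_I$, so $R(p)=0$; taking closures gives $\overline{I}\subseteq \ker R$. The entire content of the theorem lies in the reverse inclusion.

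For $f\in \ker R$, the strategy is to approximate $f$ in $L^2_a(\ball^m)$ by functions that are simultaneously holomorphic on a neighborhood of $\overline{\ball}^m$ and vanish on $\Omega_I$; Lemma~\ref{lem:gelca} will then place these approximants in $\overline{I}$, and closedness of $\overline{I}$ will finish the argument. The natural regularization is the dilation $f_r(z):=f(rz)$ for $r\in(0,1)$. Because $f$ is holomorphic on $\ball^m$, $f_r$ is holomorphic on $\ball^m_{1/r}\supset \overline{\ball}^m$, so $f_r\in\calo(\overline{\ball}^m)$; a standard argument (monotone convergence plus Taylor expansion of the Bergman kernel) shows $f_r\to f$ in $L^2_a(\ball^m)$ as $r\to 1^-$. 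Since $R$ is bounded, $R(f_r)\to R(f)=0$ in $L^2_{a,M}(\Omega_I)$; thus $f_r$ is only ``approximately'' in $\ker R$, and the task is to correct it.

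For the correction I would use both of the extension tools already available. On the one hand, Assumption~\ref{ass:principal} makes $Z_I$ a complete intersection, so its ideal sheaf in the Stein domain $\ball^m_{1/r}$ is globally generated by $p_1,\ldots,p_M$; Cartan's Theorem~B then guarantees a holomorphic extension $G_r\in \calo(\ball^m_{1/r})\subseteq \calo(\overline{\ball}^m)$ of $R(f_r)$, with any two such extensions differing by an element of $\sum_j p_j\calo(\ball^m_{1/r})\subseteq\overline{I}$. On the other hand, Theorem~\ref{thm:extension} provides a bounded extension $E(R(f_r))\in L^2_a(\ball^m)$ with $\|E(R(f_r))\|_{L^2_a}\leq \|E\|\cdot\|R(f_r)\|_{L^2_{a,M}}\to 0$. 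Since any two holomorphic extensions of $R(f_r)$ to $\ball^m$ differ by an element of the closed subspace $\overline{I}$, the cosets $G_r+\overline{I}$ and $E(R(f_r))+\overline{I}$ coincide in $L^2_a(\ball^m)/\overline{I}$. Hence, after modifying $G_r$ by a suitable element of $\sum_j p_j\calo(\overline{\ball}^m)\subseteq \overline{I}$, I may assume the chosen $G_r$ satisfies $\|G_r\|_{L^2_a}\to 0$ as $r\to 1^-$. With such a choice, $h_r:=f_r-G_r$ belongs to $\calo(\overline{\ball}^m)$ and vanishes identically on $\Omega_I$, so by Lemma~\ref{lem:gelca} we have $h_r\in\overline{I}$; and $h_r\to f-0=f$ in $L^2_a(\ball^m)$, giving $f\in\overline{I}$.

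\textbf{Main obstacle.} The delicate step is the reconciliation of the two extensions: one must exhibit, inside a single affine coset modulo the ideal, a representative that is at once holomorphic across $\partial\ball^m$ (so that Lemma~\ref{lem:gelca} applies) and of controlled $L^2_a$ norm. The integral-kernel extension $E(R(f_r))$ is $L^2$-small but has boundary singularities along $\partial\Omega_I$ coming from the Henkin-type kernel $(1-z\cdot\bar\zeta)^{-k-M}$, while the Cartan extension is smooth across the boundary but comes with no a priori norm estimate. Bridging this gap — either by a diagonal ``dilate, correct, re-dilate'' scheme that reduces the $L^2_{a,M}$ error at each step while keeping the current approximant in $\calo(\overline{\ball}^m)$, or by invoking a Skoda-type $L^2$ division theorem available precisely because $p_1,\ldots,p_M$ is a regular sequence under Assumption~\ref{ass:principal} — is where essentially all the technical content lives.
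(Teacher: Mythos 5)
Your overall framework is the right one, and it matches the paper's: reduce to showing $\ker R\subseteq\overline{I}$, approximate $f\in\ker R$ by dilations $f_r\in\calo(\overline{\ball}^m)$, correct $f_r$ to land in $\calo(\overline{\ball}^m)\cap\ker R$, and then invoke Lemma~\ref{lem:gelca}. You also correctly isolate where the work is. But the proof as written has a genuine gap at precisely the step you flag as the ``main obstacle,'' and the gap is not merely technical --- the argument you sketch for bridging it does not go through.

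Specifically, the claim that one can ``modify $G_r$ by a suitable element of $\sum_j p_j\calo(\overline{\ball}^m)$'' so that the modified $G_r$ has small $L^2_a$ norm is unjustified. Your reasoning is that $G_r$ and $E(R(f_r))$ represent the same coset modulo $\overline{I}$, and the latter is $L^2$-small. But (i) that two extensions of the same boundary data differ by an element of $\overline{I}$ is essentially the content of the theorem you are proving, so invoking it here is circular; and (ii) even granting it, the only small-norm representative in sight is $E(R(f_r))$ itself, which is \emph{not} in $\calo(\overline{\ball}^m)$ --- it has Henkin-kernel singularities on $\partial\ball^m$ --- so passing from $G_r$ to $E(R(f_r))$ exits the class where Lemma~\ref{lem:gelca} applies. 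A Cartan Theorem B extension and an $L^2$-controlled extension are not obviously reconcilable by a subtraction from $\sum_j p_j\calo(\overline{\ball}^m)$, and neither the ``diagonal'' scheme nor a Skoda-type division argument is carried out.

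The paper resolves the obstacle differently and more economically: instead of reconciling two extensions, it produces a \emph{single} extension that is simultaneously holomorphic across $\partial\ball^m$ and $L^2$-small, by running the integral-kernel extension operator on a slightly dilated ball. Concretely, for $s<1$ close to $1$ one builds $E_s:L^2_{a,M}(\Omega_I^s)\to L^2_a(\ball^m_{1/s})$ with $R_sE_s=\mathrm{Id}$, observes that $\|E_s\|$ is uniformly bounded for $s$ near $1$ (a refinement of the estimates behind Theorem~\ref{thm:extension}), and sets $F_\epsilon:=f_{r_1}-E_{s_1}R_{s_1}(f_{r_1})$ with $r_1<s_1<1$. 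Since $\ball^m_{1/s_1}\supset\overline{\ball}^m$, $F_\epsilon\in\calo(\overline{\ball}^m)$; since $R_{s_1}E_{s_1}=\mathrm{Id}$ on $\Omega_I^{s_1}\supset\Omega_I$, $R(F_\epsilon)=0$; and since $\|R(f_{r_1})\|\le\|R\|\,\epsilon$ and $\|E_{s_1}\|\le M$, one gets $\|f-F_\epsilon\|\lesssim\epsilon$. This is the ingredient your proposal is missing: a dilation not only of $f$ but of the extension operator itself. Without it, your argument correctly identifies the target but does not reach it.
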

\begin{proof}
We observe that if $f$ belongs to $\calo(\overline{\ball}^m)\cap \ker R$, then Lemma \ref{lem:gelca} implies that $f$ belongs to the closure $\overline{I}$ of $I$ in $L^2_a(\ball^m)$.  In the following, we show that every function in $\ker R$ can be approximated arbitrarily closely by functions in $\calo(\overline{\ball}^m)\cap \ker R$. 

Let $f$ be a function in the kernel $\ker R$.  For $0<r< 1$, define $f_r$ to be $f_r(z):=f(rz)$. $f_r$ is a holomorphic function on the (open) ball $\ball^m_{1/r}$ of radius $1/r$ in $\complex^m$. Furthermore, we have the following properties of $f_r$ on the unit ball $\ball^m$.
\begin{enumerate}
\item 
\[
\lim_{r\to 1} f_r(z)=f(z),\qquad \forall z\in \ball^m.
\]
\item 
\[
\lim_{r\to 1} ||f_r||_{L^2_a(\ball^m)}=||f||_{L^2_a(\ball^m)}.
\]
\end{enumerate} 
From the above two properties, we conclude that $f_r$ converges to $f$ in $L^2_a(\ball^m)$ as $r\to 1$. 

Fix $\epsilon>0$. Since $f_r$ converges to $f$ in $L^2_a(\ball^m)$, there exists a number $r_1$ with $0<r_1<1$ such that $||f-f_{r_1}||_{L^2_a(\ball^m)}<\epsilon$. As $R(f)=0$, $R(f-f_{r_1})=-R(f_{r_1})\in L^2_{a,M}(\Omega_I)$.  Furthermore, as $R:L^2_a(\ball^m)\to L^2_{a,M}(\Omega_I)$ is bounded,  
\[
||R(f_{r_1})||_{L^2_{a,M}(\Omega_I)}=||R(f-f_{r_1})||_{L^2_{a,M}(\Omega_I)}\leq ||R||\cdot ||f-f_r||_{L^2_{a}(\ball^m)}=||R||\epsilon. 
\]

For $0<s<1$, consider the ball $\ball^m_{1/s}\subset \complex ^m$ of radius $1/s$ with the boundary $\partial \ball^m_{1/s}=\mathbb{S}^{2m-1}_{1/s}$. Let $\Omega^s_{I}$ be the intersection of $\ball^m_{1/s}$ with $Z_I$. Assumption \ref{ass:principal} implies that there is a number $S$ with $0<S<1$ such that the similar properties as in Assumption \ref{ass:principal} also hold on the $\Omega_I^{s}$ and $\partial \Omega_I ^s$ for all $s$ with $S<s<1$. Let $L^2_{a}(\ball^m_{1/s})$ and $L^2_{a,M}(\Omega^s_{I})$ be the corresponding (weighted) Bergman spaces. Then Theorem \ref{thm:extension} can be naturally generalized to produce an extension operator 
\[
E_s: L^2_{a,M}(\Omega_I^s)\longrightarrow L^2_{a}(\ball^m_{1/s}),
\]
and a restriction operator 
\[
R_s: L^2_{a}(\ball^m_{1/s})\longrightarrow L^2_{a,M}(\Omega_I^s),
\]
such that $R_sE_s=Id$. Furthermore, following the estimates in \cite[Lemma 3.10 and Theorem 4.1]{be:extension}, we can obtain that there is a number $S'$ with $0<S'<1$ and $M>0$ such that $||E_s||<M$ for all  $s$ satisfying $S'<s<1$.  

When $r_1<s<1$, $R_s(f_{r_1})$ is well defined in $L^2_{a,M}(\Omega_I^{s})$. As $Z_I$ intersects with $\mathbb{S}^{2m-1}_{1/s}$ transversely, the defining function $\rho_s(z)$ for $\Omega_I^s$ is continuous with respect to $s$. When $s$ goes to 1, $||R_s(f_{r_1})||_{L^2_{a,M}(\Omega_I^s)}$ converges to $||R(f_{r_1})||_{L^2_{a,M}(\Omega_I)}$. In particular, there is a number $s_1$ with $r_1<s_1<1$, such that 
\[
||R_{s_1}(f_{r_1})||_{L^2_{a,M}(\Omega_I^{s_1})}\leq 2||R(f_{r_1})||_{L^2_{a,M}(\Omega_I)}\leq 2||R||\epsilon.
\]
As the norm of $E_s$ is uniformly bounded by $M$ for $S<s<1$, we will choose $s_1$ such that $||E_{s_1}||<M$. Then $||E_{s_1}R_{s_1}(f_{r_1})||_{L^2_a(\ball^m_{1/s_1})}$ is bounded by $2M||R|| \epsilon$. 

Observe that both $f_{r_1}$ and $E_{s_1}R_{s_1}(f_{r_1})$ are holomorphic on $\overline{\ball}^m$. Define $F_\epsilon\in \calo(\overline{\ball}^m)$ by $F_\epsilon(z):=f_{r_1}(z)-E_{s_1}R_{s_1}(f_{r_1})(z)$. Then 
\[
R(F_\epsilon)=R(f_{r_1})-R(E_{s_1}R_{s_1}(f_{r_1}))=R(f_{r_1})-R(f_{r_1})=0. 
\]
Hence, $F_\epsilon$ is inside $\ker R$, and therefore in $\calo(\overline{\ball}^m)\cap \ker R$.  We have the following estimate of the norm $||f-F_\epsilon||_{L^2_a(\ball^m)}$. 
\begin{eqnarray*}
&||f-F_\epsilon||_{L^2_a(\ball^m)}&=||f-f_{r_1}+E_{s_1}R_{s_1}(f_{r_1})||_{L^2_a(\ball^m)}\leq ||f-f_{r_1}||_{L^2_a(\ball^m)}+\\
&&+||E_{s_1}R_{s_1}(f_{r_1})||_{L^2_a(\ball^m)}
\leq \epsilon+||E_{s_1}R_{s_1}(f_{r_1})||_{L^2_a(\ball^m_{1/s_1})}\leq (2M||R||+1)\epsilon.
\end{eqnarray*}
Hence we conclude that $f$ can be approximated arbitrarily closely by functions in $\calo(\overline{\ball}^m)\cap \ker R$, and therefore $f$ is in the closure $\overline{I}$ of $I$ in $L^2_a(\ball^m)$.  This shows that $\ker R$ is contained inside $\overline{I}$. 

Finally, as $\overline{I}$ is naturally contained in $\ker R$, we obtain that $\overline{I}=\ker R$. 

\end{proof}

The result in Theorem \ref{thm:closure} was stated in a more general context without Assumption \ref{ass:principal} in \cite[Theorem. 4.1 and Remark 4.4]{ps:transversality}. But Mihai Putinar and Kunyu Guo explained to us that there are mistakes in the proofs. We refer the readers to \cite{am:ideal} and \cite{pu:dense} for more results along this direction.  In general it is an open question if the closure $\overline{I}$ of the ideal $I$ in $L^2_a(\ball^m)$ is equal to the kernel of $R$ when $I$ is radical.

\end{document}